\crefname{equation}{}{} 
\crefname{enumi}{}{} 
\newlist{thmenum}{enumerate}{1} 
\setlist[thmenum]{label=\textup{(\roman*)},
                  ref=\thetheorem-\textup{(\roman*)}}
\crefname{thmenumi}{Theorem}{Theorem}
\definecolor{c1}{RGB}{229, 229, 255}
\definecolor{c2}{RGB}{151, 155, 255}
\theoremstyle{plain}
\newtheorem{lemma}{Lemma}[section]
\newtheorem{proposition}[lemma]{Proposition}
\newtheorem{theorem}[lemma]{Theorem}
\newtheorem{corollary}[lemma]{Corollary}
\theoremstyle{definition}
\newtheorem{definition}[lemma]{Definition}
\theoremstyle{remark}
\newtheorem{remark}[lemma]{Remark}
\numberwithin{equation}{section}
\newcommand{\N}{\mathbb{N}}
\newcommand{\R}{\mathbb{R}}
\renewcommand{\i}{\ifmmode\mathit{\mathchar"7010 }\else\char"10 \fi}
\renewcommand{\j}{\ifmmode\mathit{\mathchar"7011 }\else\char"11 \fi}
\newcommand{\abs}[1]{\left|#1\right|}
\newcommand{\norm}[1]{\left\|#1\right\|}
\renewcommand{\d}{\mathrm{d}}
\newcommand{\dd}{\, \mathrm{d}}
\newcommand{\pt}{\partial_t}
\newcommand{\px}{\partial_x}
\newcommand{\eps}{\varepsilon}
\newcommand{\rsc}{\mathcal{RS}}
\newcommand{\rsj}{\mathcal{RS}^J}
\newcommand{\rsb}{\mathcal{RS}^e}
\begin{document}

\title[Feedback stabilization for $2 \times 2$  system at a junction]{Feedback stabilization for entropy solutions of a $2 \times 2$ hyperbolic system of conservation laws at a junction} 

\author[G.~M.~Coclite]{Giuseppe Maria Coclite}
\address[G.~M.~Coclite]{Polytechnic University of Bari, Department of Mechanics, Mathematics and Management, Via E.~Orabona 4, 70125 Bari,  Italy.}
\email{giuseppemaria.coclite@poliba.it}

\author[N.~De Nitti]{Nicola De Nitti}
\address[N.~De Nitti]{EPFL, Institut de Math\'ematiques, Station 8, 1015 Lausanne, Switzerland.}
\email[]{nicola.denitti@epfl.ch}

\author[M.~Garavello]{Mauro Garavello}
\address[M.~Garavello]{Università di Milano Bicocca, Dipartimento di  Matematica e Applicazioni, via R.~Cozzi 55, 20125 Milano,  Italy.}
\email{mauro.garavello@unimib.it}

\author[F.~Marcellini]{Francesca Marcellini}
\address[F.~Marcellini]{Universit\`a di Brescia, Dipartimento di Ingegneria dell'Informazione, via Branze 38, 25123 Brescia,   Italy.}
\email{francesca.marcellini@unibs.it}

\keywords{Gas transport, hyperbolic systems of conservation laws, networks, junction conditions, BV exponential stabilization, entropy solutions.}

\subjclass[2020]{35L65, 35L45, 35B25.}

\begin{abstract}
  We consider the $p$-system in Eulerian coordinates on a star-shaped
  network. Under suitable transmission conditions at the junction and
  dissipative boundary conditions in the exterior vertices, we show
  that the entropy solutions of the system are exponentially
  stabilizable. Our proof extends the strategy by Coron
  et al. (2017) and is based on a front-tracking algorithm used to
  construct approximate piecewise constant solutions whose BV norms
  are controlled through a suitable exponentially-weighted Glimm-type
  Lyapunov functional.
\end{abstract}

\maketitle

\section{Introduction}
\label{sec:intro}

We aim to stabilize the entropy solutions to the $p$-system (or Euler equations)
in a star-shaped network around a given equilibrium by using boundary
feedback controls. The $p$-system models a fluid in a rectilinear 1D pipe
through the evolution of some macroscopic quantities, namely its density
and linear momentum; see~\cite{MR3468916, Lu} and the references therein for
a more complete description.

Let us consider $N \in \N\setminus\{0, 1\}$ rectilinear tubes,
 modeled by the real interval $I \coloneqq (0, 1)$, exiting a junction $J$,
which is located at the position $x = 0$.
For
\(\ell \in \{1, \ldots, N\}\), the direction and section of the
\(\ell\)-th tube are described, respectively, by the direction and the
norm of a vector \(\nu_{\ell} \in \mathbb{R}^{3} \backslash\{0 \} .\)
All tubes are filled with the same compressible, inviscid and
isentropic (or isothermal)
fluid, and we assume that friction along the walls is neglected.
Hence the fluid dynamics 
can be modeled through \(n\) copies of the one-dimensional
\(p\)-system in Eulerian coordinates:
\begin{align}
  \label{eq:p}
  \begin{cases}
    \partial_{t}\rho_{\ell}+\partial_{x} q_{\ell}=0,
    & t >0, \, x \in [0,1),
    \\
    \partial_{t} q_{\ell}+\partial_{x}\left(\frac{q_{\ell}^{2}}{\rho_{\ell}}
      +p\left(\rho_{\ell}\right)\right)=0, \qquad\qquad
    & t>0, \ x \in  {[0,1)},
  \end{cases}
  \qquad
  \ell \in\{1, \ldots, N\}.
\end{align}
Here, \(t\) is the time, and, along the \(\ell\)-th tube,
\(x\) is the abscissa, \(\rho_{\ell} = \rho_{\ell}(t, x)\) is the fluid density,
and \(q_{\ell} = q_{\ell}(t,x)\) is its linear momentum density. 
We assume that the pressure law \(p=p(\rho)\) is the same for all the tubes;
it plays the role of the {equation of state} of the fluid under
consideration and it is assumed to satisfy the following hypothesis: 
\begin{align}
  \label{ass:p}\tag{P}
  p \in C^2(\R^+;\R^+), \quad  p'>0, \quad p''>0.
\end{align}

We supplement~\cref{eq:p} with a set of initial conditions, of coupling
conditions at the junction $J$, and of boundary conditions at the position
$x=1$ of each pipe.
As initial conditions, we consider
\begin{equation}
  \label{eq:1}
  \left(\rho_{\ell}, q_{\ell}\right)(0, x)
  =\left({\rho}_{0,\ell}, {q}_{0,\ell}\right)(x), \quad x \in I, \quad \text{$\ell \in \{1, \ldots, N\}$},
\end{equation}
where ${\rho}_{0,\ell}$ and ${q}_{0,\ell}$ are given functions
in $L^1\left(I\right)$ with finite total variation.

In this paper, at the junction $J$, we consider
the notion of \emph{P-solutions}, introduced in~\cite{MR2247787},
which prescribes that 
\begin{enumerate}
\item mass is conserved;

\item the trace  of the linear momentum flux
  (sometimes called \emph{dynamic pressure}) at $J$
  is the same for all tubes;

\item entropy may not decrease at the junction.
\end{enumerate}
This concept of solution yields the well-posedness in
$L^1$; see~\cite{MR2641394, ColomboGaravello1} for further details.
Finally, at the exterior boundary we impose a feedback control, described in terms of the Riemann invariants of the $p$-system.

The main result of the present paper is the existence of a feedback type
control, acting at the exterior boundary of the star-shaped network, which
asymptotically stabilizes the $p$-system~\cref{eq:p}
around an equilibrium configuration.
Roughly speaking, with an equilibrium over the network we mean a vector
\begin{equation}
  \label{eq:equilibrium}
  \left(\left(\bar \rho_1, \bar q_1\right), \ldots,
    \left(\bar \rho_N, \bar q_N\right)\right) \in \left(\R^+ \times \R \right)^N
\end{equation}
of constant states, which provide an equilibrium solution for~\cref{eq:p}.
The stabilization's proof is done through a Lyapunov-type functional, similar
to the one considered in~\cite{MR3567480}, which is decreasing in time,
on the wave-front tracking construction, and eventually vanishes.
We strongly use the fact that, locally around the equilibrium, one
eigenvalue of the linearized system is strictly negative and one
strictly positive.
It is important to notice that, in this paper, we do not work with strong solutions,
but with weak solutions, which
possibly have discontinuities. 

Existence and well-posedness of entropy admissible solutions
for the Cauchy problem at junctions
was established in \cite{ColomboGaravello1}, and, with a similar technique, can be also extended in the case of the initial-boundary value problem. The problem of the stabilization is, instead, more difficult. While our Lyapunov functional is similar
to the one considered in~\cite{MR3567480}, we have to face additional challenges. Indeed, in~\cite{MR3567480}, the authors considered a $2\times2$ hyperbolic system
of conservation laws with positive characteristic speeds on a bounded segment; as a consequence, the feedback boundary control acts on the left boundary according to the trace of the solution at the right boundary. On the other hand, in our work, the presence of a strictly positive eigenvalue and a strictly negative one complicates the analysis of the feedback mechanism. 

Further recent results inspired by \cite{MR3567480} are contained in \cite{zbMATH07352268,zbMATH07501181}. Previous to \cite{MR3567480}, the stabilization of a scalar
conservation law through a stationary feedback mechanism was achieved
in \cite{MR3103174}; in the case of open-loop controls, results on
asymptotic stabilization and on controllability for a general hyperbolic system of
conservation laws with either genuinely nonlinear or linearly
degenerate characteristic fields (in the sense of Lax) and
characteristic speeds strictly separated from $0$, were obtained in
\cite{MR1920513} by suitably acting on both sides of the interval or
in \cite{MR2311519} by acting on a single boundary point. In \cite{zbMATH05302126}, an entropy-based Lyapunov functional was used for the stability analysis of equilibria in networks of scalar conservation laws. For the problem of exact controllability of both classical and entropy solutions for systems of conservation laws on an interval, we refer to \cite{zbMATH02205808, zbMATH05502821, zbMATH01799638, zbMATH06974829, zbMATH07738692, zbMATH02024239}.
Instead, few results about the stabilization of hyperbolic systems on networks in the context of entropy solutions are currently available in the literature (see
for example \cite{zbMATH01532631, zbMATH05770789} for networked wave equations).

On the other hand, for the stabilization of classical solutions, many more works have appeared throughout the years (for both scalar equations and systems, on a single segment or on networks). The exponential stabilization of gas flow governed by the isothermal Euler equations in fan-shaped networks in the \(L^2\)-sense has been studied in \cite{MR2861427}. For a single pipe, a strict \(H^1\)-Lyapunov functional and feedback stabilization for the quasilinear isothermal Euler equations with friction have been studied in \cite{MR2805928}. Similar results have been obtained for \(H^2\)-Lyapunov functions in \cite{MR2996527} and for $C^1$-Lyapunov functions in \cite{zbMATH06444727}. The finite-time stabilization of a network of strings is studied in \cite{MR3485747, MR2661409}. For the problem of exact controllability of classical solutions on networks, we refer to \cite{zbMATH05969737}. Finally, we refer to the book \cite{zbMATH06567166} for further results.

The paper is organized as follows. In \Cref{sec:prelim}, we
collect several preliminary notions on the solutions of the $p$-system
at a junction. In \Cref{sec:main}, we present our main
stabilization result. The proof, based on a front-tracking approximation and on the
study of an exponentially weighted Glimm functional, is contained
in \Cref{sec:proof}.

\section{Basic definitions}
\label{sec:prelim}
In this section, we recall the key features of the $p$-system
in Eulerian coordinates,
we introduce the concept of solution of~\cref{eq:p}
with initial conditions~\cref{eq:1} and with coupling and feedback type
boundary conditions, and we state the main result of this paper.

\subsection{Quantities of interest for the \texorpdfstring{$p$-system}{p-system}}
\label{ssec:quantities}

Following~\cite[Section 6.1]{MR3931112}, let us recall here some quantities
of interest for the $p$-system~\eqref{eq:p}.
First, let us introduce the notation 
\begin{equation}
  \label{eq:notation-u-f}
  u \coloneqq  (\rho, q)^{\top},
  \qquad f(u) \coloneqq  \left(q, \frac{q^{2}}{\rho}
  +p(\rho)\right)^{\top},
\end{equation}
so that the $p$-system~\cref{eq:p} can be written in the compact form
$\pt u+\px f(u)=0$.
The Jacobian matrix of flux function $f$ is
\begin{align*}
  \nabla f(u)=\left(
  \begin{array}{cc}
    0
    & 1
    \\
    -\frac{q^{2}}{\rho^{2}}+p'(\rho)
    &
      \frac{2 q}{\rho}
  \end{array}
      \right)
\end{align*}
and so, by assumption~\cref{ass:p}, the system is strictly hyperbolic and
its eigenvalues \(\lambda_{1}\), \(\lambda_{2}\)
and the corresponding right eigenvectors \(r_{1}, r_{2}\) are given by
\begin{align*}
  \begin{array}{ll}
    \lambda_{1} (\rho, q) = \frac{q}{\rho}-c(\rho),
    & \lambda_{2}(\rho, q)=\frac{q}{\rho}+c(\rho),
    \vspace{.2cm}\\
    r_{1}(\rho, q)=\left[
    \begin{array}{c}
      -\rho
      \\
      \rho c(\rho) - q
    \end{array}
    \right],
    & r_{2}(\rho, q) =
      \left[
      \begin{array}{c}
        \rho,
        \\
        q+\rho c(\rho)
      \end{array}
    \right],
  \end{array}
\end{align*}
where $c(\rho) := \sqrt{p'(\rho)}$ is the sound speed.
Moreover, we have that
\begin{equation}
    \label{eq:r-orientation}
  \nabla \lambda_{1}(\rho, q) \cdot r_{1}(\rho, q)
  = c(\rho)+\rho c^{\prime}(\rho) > 0,
  \qquad \nabla \lambda_{2}(\rho, q)
  \cdot r_{2}(\rho, q)=c(\rho)+\rho c^{\prime}(\rho) >0, 
\end{equation}
so that both characteristic fields are genuinely nonlinear.
The \emph{Riemann invariants} $v_1$ and $v_2$
of the $p$-system~\eqref{eq:p} are
\begin{equation}
  \label{eq:Riemann-invariants}
  v_1 \left(\rho, q\right) :=
  \frac{q}{\rho} + \int_1^{\rho} \frac{\sqrt{p'(r)}}{r} \dd r,
  \qquad
  v_2 \left(\rho, q\right):=
  \frac{q}{\rho} - \int_1^{\rho} \frac{\sqrt{p'(r)}}{r} \dd r.
\end{equation}
Note that
\begin{equation}
    \label{eq:inviariant-orthogonality}
    \begin{split}
        \nabla v_1(\rho, q) \cdot r_1(\rho, q) 
        & = \nabla v_2(\rho, q) \cdot r_2(\rho, q) = 0,    
        \\
        \nabla v_1(\rho, q) \cdot r_2(\rho, q) 
        & = \nabla v_2(\rho, q) \cdot r_1(\rho, q) 
        = 2 \sqrt{p'(\rho)},
    \end{split}
\end{equation}
for every $(\rho, q) \in \R^+ \times \R$; $v_1$ is constant along the rarefaction curves of the first family,
while $v_2$ is constant along the rarefaction curves of the second family. Moreover, $v_1$ is monotonically increasing along the 
rarefaction curves of the second family and $v_2$ is monotonically increasing along the 
rarefaction curves of the first family.

The shock and the rarefaction curves through a state \(u \in \R^+ \times \R\) can be
parameterized as
\begin{align}
\label{eq:lax-curves}
  \sigma \mapsto S_k(\sigma)(u), \quad \sigma \mapsto R_k(\sigma)(u),
\end{align}
where $k \in \left\{1,2\right\}$ and the parametrization is chosen so that
\begin{align*}
  \lambda_k\left(S_k(\sigma)(u)\right)
  =\lambda_k\left(R_k(\sigma)(u)\right)=\lambda_k(u)+\sigma;
\end{align*}
see \Cref{fig:e} (left). Consequently we use the following 
parametrization for the Lax curve
\begin{equation*}
  \sigma \mapsto \mathcal L_k(\sigma)(u) = \left\{
    \begin{array}{ll}
      R_k(\sigma)(u),
      & \qquad \textrm{ if } \sigma \ge 0,
      \\
      S_k(\sigma)(u),
      & \qquad \textrm{ if } \sigma < 0.
    \end{array}
  \right.
\end{equation*}

We introduce the regions 
\begin{align}
\label{eq:regions}
\begin{aligned}
&A_{-}=\left\{\mathbb{R}^{+} \times \mathbb{R}: \lambda_2(\rho, q)<0\right\}, \\
&A_0^{-}=\left\{\mathbb{R}^{+} \times \mathbb{R}: \lambda_2(\rho, q) \geq 0, q \leq 0\right\}, \\
&A_0^{+}=\left\{\mathbb{R}^{+} \times \mathbb{R}: \lambda_1(\rho, q) \leq 0, q \geq 0\right\}, \\
&A_{+}=\left\{\mathbb{R}^{+} \times \mathbb{R}: \lambda_1(\rho, q)>0\right\}, \\
&A_0=A_0^{-} \cup A_0^{+} .
\end{aligned}
\end{align}
Here $A_0$ is known as \emph{subsonic region}; see \Cref{fig:e} (right).

Finally, we introduce the functions
\begin{align}
  \label{eq:dynamic-pressure}
  P(\rho, q)
  & = \frac{q^2}{\rho} + p(\rho),
  \\
    \label{eq:energy}
E(\rho, q)  
  & =  \frac{q^2}{2\rho}
    + \rho \int_{1}^{\rho} \frac{p(r)}{r^2} \dd r,
  \\
      \label{eq:energy-flux}
  F(\rho, q)
  & =   \frac{q}{\rho}(E(\rho,q) + p(\rho)),
\end{align}
which are respectively the dynamic pressure, the energy, and
the energy flow; see \Cref{fig:e} (right) for the plot of a level curve of $P$.

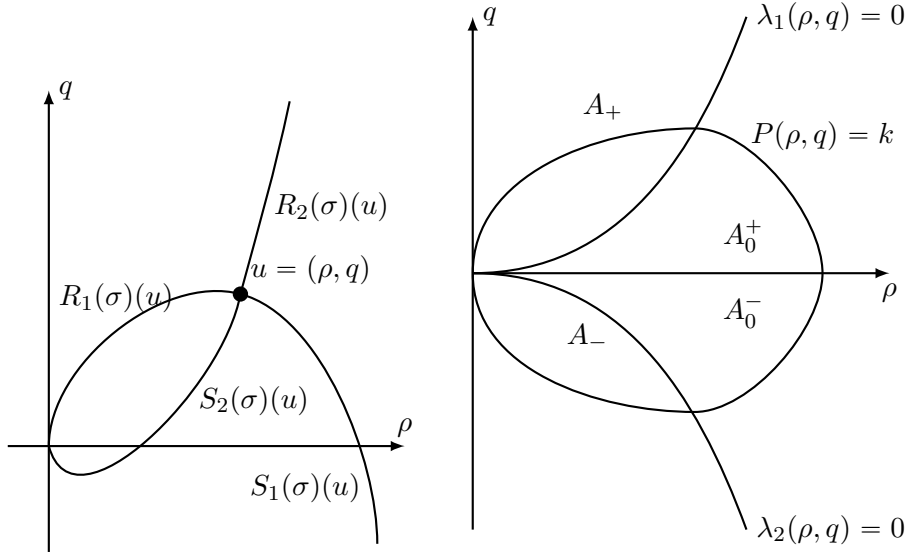
\begin{figure}[h]
	\centering
	        \begin{subfigure}[b]{0.4\textwidth}
		
	\begin{tikzpicture}[thick, scale=0.9, font = \large, scale=0.8]
	
    \draw[-latex] (0,0) -- node [pos = .55, right] {$R_1(\sigma)(u)$} (0,8.5) node [right] {$q$};
	\draw[-latex] (-.75,1.95) -- (6.5,1.95) node [above] {$\rho$};
	\fill (3.5, 4.75) coordinate (X) circle (.14cm) node[above right] {$u = (\rho, q)$};
	\draw (0,1.95) to [out = 90, in = 165, looseness = .85] (X) to [out = -15, in = 90, looseness = .75] node [pos = .8, left] {$S_1(\sigma)(u)$} (6, 0.15);
	\draw (0,1.95) to [out = 285, in = 260] node [pos = .65, right = .05cm] {$S_2(\sigma)(u)$} (X) to [out = 75, in = 258] node [pos = .45, right] {$R_2(\sigma)(u)$} (4.4, 8.3);
	
 \end{tikzpicture}
\end{subfigure}
        \begin{subfigure}[b]{0.4\textwidth}

	\begin{tikzpicture}[thick,  font = \large, scale=0.8]
	
	\draw[-latex] (0,0) -- (0,8.5) node [right] {$q$};
	\draw[-latex] (0,4.25) -- node [pos = .65, below = .15cm] {$A_0^-$} node [pos = .65, above = .15cm] {$A_0^+$} +(6.85,0) node [below] {$\rho$};
	\draw (0,4.25) to [out = 0, in = 250] (4.5,8.5) node [right] {$\lambda_1(\rho,q)=0$};
	\draw (0,4.25) to [out = 0, in = 110] node [pos = .3, below = .15cm] {$A_-$} (4.5,0) node [right] {$\lambda_2(\rho,q)=0$};
	\draw (0,4.25) to [out = 90, in = 180, looseness = .95] node [pos = .7, above = .15cm] {$A_+$} (3.65,6.65) to [out = 0, in = 90, looseness = .75] (5.75, 4.25) node[above=1.5cm]{$P(\rho, q) = k$};
	\draw (0,4.25) to [out = 270, in = 180, looseness = .95] (3.65,1.95) to [out = 0, in = 270, looseness = .75] (5.75, 4.25);
	
	\end{tikzpicture}
\end{subfigure}
	\caption{\textsc{Left:} The Lax curves in~\eqref{eq:lax-curves} through a point. More precisely, $R_1$ and $R_2$ denote the rarefaction curves of the first and second family. Instead $S_1$ and $S_2$ denote the shock curves of the first and second family.
    \textsc{Right:} The regions $A_-$, $A_0^\pm$, $A_+$, defined in~\eqref{eq:regions}, the sonic curves $\lambda_1(\rho, q)=0$ and $\lambda_2(\rho, q)=0$, and a level curve of the dynamic pressure,
    defined in~\eqref{eq:dynamic-pressure}.}
	\label{fig:e}
\end{figure}

\subsection{Initial-Boundary Valued Problem}
\label{ssec:def}

Assigning at time \(t=0\) an initial state
\(\left({\rho}_{o, \ell}, {q}_{o,\ell}\right) 
\in L^1 (I; \mathbb{R}^{+} 
\times \mathbb{R})\) in each of the \(N\) ducts exiting 
the junction \((\ell \in\{1, \ldots, N\})\)
and $N$ boundary data $B_\ell \in L^1_{loc}(\R^+; \R)$,
we consider with the following initial-boundary value problem:
\begin{align}\label{eq:cauchy}
  \begin{cases}
    \partial_{t}\rho_{1}+\partial_{x} q_{1}=0,
    & t >0, \, x \in I,
	\\
    \partial_{t} q_{1} + \partial_{x} P(\rho_1, q_1) = 0,
    & t >0, \, x \in I, 
	\\
	\vdots
	\\
    \partial_{t}\rho_{N}+\partial_{x} q_{N}=0,
    & t >0, \, x \in I,
	\\
    \partial_{t} q_{N} + \partial_{x} P(\rho_N, q_N) = 0,
    & t >0, \, x \in I, 
	\\
    (\rho_1, q_1)(0, x) = \left(\rho_{0,1}, q_{0,1}\right)(x),
    & x \in I, 
	\\
	\vdots
	\\
    (\rho_N, q_N)(0, x) = \left(\rho_{0,N}, q_{0,N}\right)(x),
    & x \in I, 
	\\
	v_2(\rho_1(t, 1), q_1(t, 1)) = B_1 (t),
	& t > 0,
	\\
	\vdots
	\\
	v_2(\rho_N(t, 1), q_N(t, 1)) = B_N (t),
	& t > 0.
  \end{cases}
\end{align}

\begin{definition}[Entropy solution of \cref{eq:cauchy}]
  \label{def:cauchy}
  The tuple $((\rho_1, q_1), \ldots, (\rho_N, q_N))$ is an \textit{entropy solution} to
  the Cauchy problem~\cref{eq:cauchy} if the following conditions
  hold.
  \begin{enumerate}
  \item For every $\ell \in \{1, \ldots, N\}$,
    $(\rho_\ell, q_\ell) \in C^0([0, +\infty); L^1(I; \R^+ \times
    \R))$.
		
  \item For every $\ell \in \{1, \ldots, N\}$ and for a.e. $t > 0$,
    the map $x \mapsto (\rho_\ell(t, x), q_\ell(t,x))$ has finite
    total variation.

  \item  For every $\ell \in \{1, \ldots, N\}$ and for every
    $\Phi \in C^\infty_c\left((0, +\infty) \times I; \R\right)$, it holds
    \begin{equation*}
      \int_0^{+\infty} \int_0^1 \left[\partial_t \Phi(t, x)
        \left(
          \begin{array}{c}
            \rho_\ell(t,x)
            \\
            q_\ell(t,x)
          \end{array}
        \right) + \partial_x \Phi(t,x) f(\rho_\ell(t, x), q_\ell(t,x))
      \right] \dd x \dd t = \left(
        \begin{array}{c}
          0
          \\
          0
        \end{array}
      \right).
    \end{equation*}
    
  \item For every $\ell \in \{1, \ldots, N\}$ and for every
    $\Phi \in C^\infty_c\left((0, +\infty) \times I; \R^+\right)$, it holds
    \begin{equation*}
      \int_0^{+\infty} \int_0^1 \left[\partial_t \Phi(t, x)
        E\left(\rho_\ell(t,x), q_\ell(t,x)\right)
        + \partial_x \Phi(t,x) F(\rho_\ell(t, x), q_\ell(t,x))
      \right] \dd x \dd t \ge 0.
    \end{equation*}
		
  \item For every $\ell \in \{1, \ldots, N\}$,
    $(\rho_\ell(0,x), q_\ell(0,x)) = (\rho_{0,\ell}(x), q_{0,\ell}(x))$
    for a.e. $x \in I$.

  \item For a.e. $t > 0$, the following coupling conditions hold:
    \begin{enumerate}
    \item
      $\displaystyle \sum_{\ell = 1}^N \norm{\nu_\ell} q_\ell (t, 0^+)
      = 0$;
    \item there exists $P_\ast = P_\ast(t) > 0$ (possibly depending on
      $t$) such that, for every $\ell \in \{1, \ldots, N\}$,
      \begin{equation*}
        P(\rho_\ell(t, 0^+), q_\ell(t, 0^+)) = P_\ast(t);
      \end{equation*}
    \item
      $\displaystyle \sum_{\ell = 1}^N \norm{\nu_\ell} F(\rho_\ell(t,
      0^+), q_\ell (t, 0^+)) \le 0$.
    \end{enumerate}
    
  \item For every $\ell \in \left\{1, \ldots, N\right\}$ and for a.e. $t > 0$,
    \begin{equation*}
      v_2\left(\rho_\ell(t, 1^-), q_\ell(t, 1^-)\right) = B_\ell(t).
    \end{equation*}
  \end{enumerate}
\end{definition}

Given $N$ constant states $\bar u_1, \ldots, \bar u_N \in \R^+ \times \R$ and the vector
$\vec{k} = \left(k_1, \ldots, k_N\right) \in [0, 1]^N$, we consider the
system with feedback boundary control:
\begin{align}\label{eq:feedback-cauchy}
  \begin{cases}
    \partial_{t}\rho_{1}+\partial_{x} q_{1}=0, & t >0, \, x \in I,
    \\
    \partial_{t} q_{1} + \partial_{x} P(\rho_1, q_1) = 0, & t >0, \, x
    \in I,
    \\
    \vdots
    \\
    \partial_{t}\rho_{N}+\partial_{x} q_{N}=0, & t >0, \, x \in I,
    \\
    \partial_{t} q_{N} + \partial_{x} P(\rho_N, q_N) = 0, & t >0, \, x
    \in I,
    \\
    (\rho_1, q_1)(0, x) = \left(\rho_{0,1}, q_{0,1}\right)(x), & x \in
    I,
    \\
    \vdots
    \\
    (\rho_N, q_N)(0, x) = \left(\rho_{0,N}, q_{0,N}\right)(x), & x \in
    I,
    \\
    v_2\left(\rho_1(t, 1), q_1(t, 1)\right) = k_1 v_1\left(\rho_1(t,
      1^-), q_1(t, 1^-)\right) \\ \qquad \qquad \qquad \qquad \qquad - k_1 v_1\left(\bar u_1\right) +
    v_2\left(\bar u_1\right),&t>0,\\
    \\
    \vdots
    \\
    v_2\left(\rho_N(t, 1), q_N(t, 1)\right) = k_N v_1\left(\rho_N(t,
      1^-), q_N(t, 1^-)\right) \\ \qquad \qquad \qquad \qquad \qquad  - k_N v_1\left(\bar u_N\right) +
    v_2\left(\bar u_N\right),&t>0.
  \end{cases}
\end{align}
Formally system~\cref{eq:cauchy} reduces to~\cref{eq:feedback-cauchy}
with the position
\begin{equation*}
  \left\{
    \begin{array}{l}
      B_1(t) = k_1 v_1\left(\rho_1(t, 1^-), q_1(t, 1^-)\right) 
      - k_1 v_1\left(\bar u_1\right) +
      v_2\left(\bar u_1\right),
      \\
      \vdots
      \\
      B_N(t) = k_N v_1\left(\rho_N(t, 1^-), q_N(t, 1^-)\right)
      - k_N v_1\left(\bar u_N\right) +
      v_2\left(\bar u_N\right).
    \end{array}
  \right.
\end{equation*}

We now define first the notion of solution and then that of equilibrium
solution for the system with feedback 
boundary control.

\begin{definition}[Entropy solution of \cref{eq:feedback-cauchy}]
  \label{def:cauchy-feedback}
  The tuple $((\rho_1, q_1), \ldots, (\rho_N, q_N))$ is an \textit{entropy solution} to
  the problem~\cref{eq:feedback-cauchy} if the following conditions
  hold.
  \begin{enumerate}
  \item For every $\ell \in \{1, \ldots, N\}$,
    $(\rho_\ell, q_\ell) \in C^0([0, +\infty); L^1(I; \R^+ \times
    \R))$.
		
  \item For every $\ell \in \{1, \ldots, N\}$ and for a.e. $t > 0$,
    the map $x \mapsto (\rho_\ell(t, x), q_\ell(t,x))$ has finite
    total variation.

  \item  For every $\ell \in \{1, \ldots, N\}$ and for every
    $\Phi \in C^\infty_c\left((0, +\infty) \times I; \R\right)$, it holds
    \begin{equation*}
      \int_0^{+\infty} \int_0^1 \left[\partial_t \Phi(t, x)
        \left(
          \begin{array}{c}
            \rho_\ell(t,x)
            \\
            q_\ell(t,x)
          \end{array}
        \right) + \partial_x \Phi(t,x) f(\rho_\ell(t, x), q_\ell(t,x))
      \right] \dd x \dd t = \left(
        \begin{array}{c}
          0
          \\
          0
        \end{array}
      \right).
    \end{equation*}
    
  \item For every $\ell \in \{1, \ldots, N\}$ and for every
    $\Phi \in C^\infty_c\left((0, +\infty) \times I; \R^+\right)$, it holds
    \begin{equation*}
      \int_0^{+\infty} \int_0^1 \left[\partial_t \Phi(t, x)
        E\left(\rho_\ell(t,x), q_\ell(t,x)\right)
        + \partial_x \Phi(t,x) F(\rho_\ell(t, x), q_\ell(t,x))
      \right] \dd x \dd t \ge 0.
    \end{equation*}
		
  \item For every $\ell \in \{1, \ldots, N\}$,
    $(\rho_\ell(0,x), q_\ell(0,x)) = (\rho_{0,\ell}(x), q_{0,\ell}(x))$
    for a.e. $x \in I$.

  \item For a.e. $t > 0$, the following coupling conditions hold:
    \begin{enumerate}
    \item
      $\displaystyle \sum_{\ell = 1}^N \norm{\nu_\ell} q_\ell (t, 0^+)
      = 0$;
    \item there exists $P_\ast = P_\ast(t) > 0$ (possibly depending on
      $t$) such that, for every $\ell \in \{1, \ldots, N\}$,
      \begin{equation*}
        P(\rho_\ell(t, 0^+), q_\ell(t, 0^+)) = P_\ast(t);
      \end{equation*}
    \item
      $\displaystyle \sum_{\ell = 1}^N \norm{\nu_\ell} F(\rho_\ell(t,
      0^+), q_\ell (t, 0^+)) \le 0$.
    \end{enumerate}
    
  \item For every $\ell \in \left\{1, \ldots, N\right\}$ and for a.e. $t > 0$,
    \begin{equation*}
      v_2\left(\rho_\ell(t, 1), q_\ell(t, 1)\right)
      = k_\ell v_1\left(\rho_\ell(t,
        1^-), q_\ell(t, 1^-)\right) - k_\ell v_1\left(\bar u_\ell\right) +
      v_2\left(\bar u_\ell\right).
    \end{equation*}
  \end{enumerate}
\end{definition}

\begin{definition}[Equilibrium  solution]
  \label{def:equilibrium-solution}
  We say that the tuple $\left(\bar \rho, \bar q\right)
  = ((\bar \rho_1, \bar q_1), \ldots, (\bar \rho_N, \bar q_N))
  \in \left(\R^+ \times \R\right)^N$
  is a \textit{equilibrium solution} to the Cauchy problem~\cref{eq:feedback-cauchy}
  if the tuple 
  $((\rho_1, q_1), \ldots, (\rho_N, q_N))$, defined by
  $\left(\rho_\ell(t, x), q_\ell(t, x)\right) = \left(\bar \rho_\ell,
    \bar q_\ell\right)$ for every  
  $\ell \in \left\{1, \ldots, N\right\}$, $x \in I$, and $t > 0$,
  provides, in the sense of~\Cref{def:cauchy-feedback},
  a solution to the Cauchy problem~\cref{eq:feedback-cauchy} with initial
  conditions
  \begin{equation*}
    \left(\rho_{0, \ell}(x), q_{0, \ell}(x)\right)
    = \left(\bar \rho_\ell, \bar q_\ell\right) \quad \text{  for $x \in I$.}
  \end{equation*}
\end{definition}

\section{Main results: well-posedness and feedback stabilization via a Lyapunov functional}
\label{sec:main}

The main result of this paper deals with the 
well posedness result for the 
Cauchy
problem~\cref{eq:feedback-cauchy}
and with a stabilization result
for the solutions of the same Cauchy
problem~\cref{eq:feedback-cauchy}
with feedback control acting at the external boundary, according
to \Cref{def:cauchy-feedback}. More precisely,
following \cite[Theorem 3.3]{ColomboGaravello1},
we prove existence of entropy-admissible
solutions; then we consider a suitable feedback mechanism to
achieve exponential stability in a similar way as in~\cite{MR3567480}.

\begin{theorem}[Exponential stabilization for entropy solutions]
  \label{th:main}
  Fix an $N$-tuple of subsonic states
  $(\bar{\rho}, \bar{q}) = \left(\left(\bar \rho_1, \bar q_1\right),
    \ldots, \left(\bar \rho_N, \bar q_N\right)\right)
  \in(\mathring{A}_{0})^{N}$,
  giving a
  equilibrium solution to the Cauchy problem~\cref{eq:feedback-cauchy}
  in the sense of~\Cref{def:equilibrium-solution}
  and such that
  $\sum_{\ell=1}^N \norm{\nu_\ell} F\left(\bar \rho_\ell, \bar q_\ell\right) < 0$.

  Then, there exist the constants $\bar k > 0$, $\delta_0 >0$, $L>0$,
  $C > 0$, $\nu > 0$,  
  a domain $\mathcal D$, and,
  for every $\Vec{k} = (k_1, \ldots, k_N) \in [0, \bar k]^N$,  a semigroup
  \(S:[0,+\infty[\times \mathcal{D} \rightarrow \mathcal{D}\), with
  the following properties.
 \begin{thmenum} 
  \item \emph{Domain:}
   \label{it:dom} \(\mathcal{D} \supseteq\left\{(\rho, q) \in(\bar{\rho},
      \bar{q})+{L}^{1}\left(I ;\left(\mathbb{R}^{+}
          \times \mathbb{R}\right)^{N}\right): \, \operatorname{TV}(\rho,
      q) \leq \delta_{0}\right\}\).
		
  \item \label{it:semi} \emph{Semigroup property:} For
    \((\rho, q) \in \mathcal{D}, S_{0}(\rho, q)=(\rho, q)\) and, for
    \(s, t \geq 0, S_{s} S_{t}(\rho, q)=S_{s+t}(\rho, q)\).
		
  \item \label{it:lip} \emph{Lipschitz type estimate:} For
    \((\rho, q),\left(\rho^{\prime}, q^{\prime}\right) \in
    \mathcal{D}\) and \(s, t \geq 0\), it holds
    \begin{align*}
      \left\|S_{t}(\rho, q)-S_{s}\left(\rho^{\prime}, q^{\prime}\right)
      \right\|_{{L}^{1}} \leq L \cdot\left(\left\|(\rho, q)-\left(\rho^{\prime},
      q^{\prime}\right)\right\|_{{L}^{1}}+|t-s|\right).
    \end{align*}

  \item \label{it:entropy} \emph{Entropy admissibility:} For
    every \((\rho, q) \in \mathcal{D}\), the map
    \((t, x) \mapsto S_{t}(\rho, q)(x)\) is a weak entropy solution to the
    Cauchy problem~\cref{eq:feedback-cauchy} in the sense of \cref{def:cauchy-feedback}.

    \item \label{it:exp} \emph{Exponential stabilization:} 
  For every
  $(\rho_0,q_0) \in \mathcal D$  and $ t\ge 0$,
  \begin{equation*}
    \mathrm{TV}\left(S_t\left(\rho_0, q_0\right)\right)
    \le Ce^{-\nu t} \mathrm{TV}(\rho_0, q_0).
  \end{equation*}
  \end{thmenum}
\end{theorem}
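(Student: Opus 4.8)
The plan is to adapt the wave-front tracking strategy of \cite{MR3567480} to the junction geometry, constructing $\eps$-approximate piecewise-constant solutions and controlling their total variation through an exponentially space-weighted Glimm-type functional whose decay encodes both the junction dissipation and the boundary feedback. Fix the subsonic equilibrium $(\bar\rho,\bar q)\in(\mathring A_0)^N$, so that $\lambda_1(\bar u_\ell)<0<\lambda_2(\bar u_\ell)$: in each tube the $1$-fronts travel toward the junction $x=0$ and the $2$-fronts toward the outer boundary $x=1$. I would first set up three Riemann solvers together with their linearized estimates at the equilibrium: (i) the interior Lax solver along the curves $\mathcal L_1,\mathcal L_2$ of \cref{eq:lax-curves}; (ii) the junction solver realizing the coupling conditions of \Cref{def:cauchy-feedback}, which to small incoming $1$-fronts associates outgoing $2$-fronts in every tube with a scattering norm $\theta$ that the strict dissipation $\sum_{\ell=1}^N\norm{\nu_\ell}F(\bar\rho_\ell,\bar q_\ell)<0$ keeps under control; and (iii) the boundary solver at $x=1$ enforcing $v_2=k_\ell v_1-k_\ell v_1(\bar u_\ell)+v_2(\bar u_\ell)$, which reflects an incoming $2$-front into an outgoing $1$-front with reflection coefficient $O(k_\ell)$.

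With these solvers I would run the front-tracking algorithm: approximate the datum by piecewise-constant states with $\mathrm{TV}\le\mathrm{TV}(\rho_0,q_0)+\eps$, split rarefactions into fans, and propagate fronts, resolving interior, junction, and boundary events by the respective solvers (with the usual accurate/simplified front distinction to keep the number of fronts finite). The core object is the weighted Glimm-type Lyapunov functional
\[
\Upsilon(t)=\sum_{\ell=1}^N\Big(A_1\sum_{i\in\mathcal F_1^\ell}\abs{\sigma_i}\,e^{\gamma x_i}+A_2\sum_{i\in\mathcal F_2^\ell}\abs{\sigma_i}\,e^{-\gamma x_i}\Big)+\kappa\,Q(t),
\]
where $\mathcal F_1^\ell,\mathcal F_2^\ell$ denote the $1$- and $2$-fronts in tube $\ell$, $\sigma_i$ and $x_i$ their strengths and positions, $Q$ a weighted quadratic interaction potential, and $A_1,A_2,\gamma,\kappa>0$ constants to be fixed. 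The exponent signs are dictated by the wave directions, so that transport strictly lowers each weighted strength ($1$-fronts move left with weight $e^{\gamma x}$, $2$-fronts move right with weight $e^{-\gamma x}$), giving $\frac{d}{dt}\Upsilon\le-\nu\Upsilon$ between events, while $\kappa\,Q$ absorbs the interior interactions through the standard Glimm estimate once $\delta_0$ is small.

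The crux, and the step I expect to be the main obstacle, is to make $\Upsilon$ nonincreasing across the \emph{two} reflections simultaneously. At $x=1$ an incoming $2$-front of weight $A_2e^{-\gamma}$ becomes an outgoing $1$-front of weight $A_1e^{\gamma}$, so the boundary multiplies the weighted strength by $\sim(A_1/A_2)\,k_\ell\,e^{2\gamma}$; at $x=0$ the junction maps incoming $1$-fronts of weight $A_1$ to outgoing $2$-fronts of weight $A_2$ with gain $\sim(A_2/A_1)\,\theta$. Requiring both factors below $1$ forces the round-trip gain $\theta\,\bar k\,e^{2\gamma}<1$, after which the ratio $A_1/A_2$ can be tuned to make each individual reflection contractive; at the same time $\gamma$ must be large enough to keep the interior rate $\nu>0$. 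Quantifying the nonlinear junction scattering norm $\theta$ in the weighted norm, and verifying that the energy/entropy dissipation indeed yields a $\theta$ small enough to admit such a $\gamma$ and $\bar k$, is the delicate point where the two-sided wave propagation genuinely complicates the analysis compared with \cite{MR3567480}.

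Once $\Upsilon(t)\le e^{-\nu t}\Upsilon(0)$ holds uniformly in $\eps$, the uniform BV bound and the ensuing Lipschitz-in-time estimate yield, via Helly's theorem and Bressan's compactness, a subsequence converging in $C^0([0,+\infty);L^1_{\mathrm{loc}})$ to a limit that I would verify solves \cref{eq:feedback-cauchy} in the sense of \Cref{def:cauchy-feedback}: the weak and entropy formulations pass by lower semicontinuity, and the coupling and feedback conditions by a trace analysis at $x=0$ and $x=1$. Since $\mathrm{TV}$ is comparable to $\Upsilon$ up to the bounded weights $A_1,A_2,e^{\pm\gamma}$ and the quadratically small $Q$, the decay estimate survives the limit and gives \cref{it:exp} with constant $C$. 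Finally, \cref{it:dom}, the semigroup property \cref{it:semi}, and the Lipschitz estimate \cref{it:lip} follow from the standard uniqueness and $L^1$-stability machinery of wave-front tracking (a Bressan–Liu–Yang type functional), while \cref{it:entropy} is exactly what the limit was shown to satisfy.
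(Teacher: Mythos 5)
Your proposal follows essentially the same route as the paper: the same exponentially weighted Glimm--Lyapunov functional (the paper takes $A_1=2K_J$, $A_2=1$ in your notation), the same reflection-gain bookkeeping at the junction and at the feedback boundary (the paper closes the loop by fixing $k_\ell\le e^{-2\gamma}/(4CK_J)$, exactly your round-trip condition $\theta\,\bar k\,e^{2\gamma}<1$ resolved by tuning $A_1/A_2$), the same strict decay $\tfrac{\d}{\d t}J_\gamma\le-c\gamma J_\gamma$ between interactions, and the same Helly/Bressan--Liu--Yang machinery for the limit, the semigroup, and the Lipschitz estimate. The only notable divergence is minor: the paper avoids non-physical fronts altogether, controlling the number of fronts via the observation that only finitely many interior interactions can spawn new rarefaction fans in a $2\times2$ system, whereas you invoke the accurate/simplified solver dichotomy; and the junction scattering constant $K_J$ need not be small --- it is absorbed entirely into the weight ratio, so the delicacy you anticipate there does not arise.
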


\section{Proof of \texorpdfstring{\Cref{th:main}}{Theorem 2.3}}
\label{sec:proof}

This section contains the proof of \Cref{th:main}, which is based
on the wave-front tracking technique with the use of a
specific weighted
Glimm-type functional, inspired by the one introduced in~\cite{MR3567480}.

First, in \cref{ssec:wft-approx}, we construct a Riemann solver. Then, in \cref{ssec:glimm}, we introduce a suitable Glimm-type functional. These preliminaries allow us to construct an approximate wave-front tracking solution in \cref{ssec:front}. 

The interaction estimates in \cref{ssec:glimm} yield the existence of an entropy admissible solution in  \cref{ssec:existence}. Moreover, in  \cref{ssec:semigroup}, we actually show that there exist a $L^1$-contracting semigroup of solutions. 

Finally, in \cref{ssec:decay-glimm}, we obtain the exponential stabilization result; thus concluding the proof of \cref{th:main}.

\subsection{Wave-front tracking approximation and Riemann solvers}
\label{ssec:wft-approx}

In this subsection, we construct piecewise constant approximations
via the wave-front tracking method; see~\cite{MR1816648, MR3468916, MR3443431}
for the general theory. 
Note that here we can avoid the use of non-physical waves; see~\cite[Lemma~2.5]{MR1820292} or~\cite{zbMATH06283685}.

At first, we give the following definition of an
$\eps$-approximate wave-front tracking solution to~\cref{eq:feedback-cauchy}.
\begin{definition}[$\eps$-approximate wave-front tracking solution]
  \label{def:epswf}
  Given $\eps > 0$, the map $\mathbf{u}_\eps = \left(u_{1,\eps}, \ldots,
    u_{N,\eps}\right)$ is an \textit{$\eps$-approximate
  wave-front tracking solution} to~\cref{eq:feedback-cauchy}
  if the following conditions hold.
  \begin{enumerate}
  \item For every $\ell \in \left\{1, \ldots, N\right\}$,
    $u_{\ell, \eps} = \left(\rho_{\ell, \eps}, q_{\ell, \eps}\right) \in
    C^0 \left( [0, +\infty[; L^1(I; \R^+ \times \R) \right)$.
    
  \item For every $\ell \in \left\{1, \ldots, N\right\}$,
    $\left(\rho_{\ell, \eps}, q_{\ell, \eps}\right)$
    is piecewise constant, with discontinuities along finitely
    many straight lines in $(0, +\infty) \times I$.
    Moreover, the jumps can be of the first family or
    of the second family.
    
  \item For $\ell \in \left\{1, \ldots, N\right\}$,
    along each jump $x = x(t)$ of the first family (resp. second family),
    the values $u^- = u_{\ell, \eps}\left(t, x(t)^-\right)$
    and $u^+ = u_{\ell, \eps}\left(t, x(t)^+\right)$ are related by
    \begin{equation*}
      u^+ = \mathcal L_1\left(\sigma_1\right)\left(u^-\right)
      \quad \left(\textrm{resp. }
        u^+ = \mathcal L_2\left(\sigma_2\right)\left(u^-\right) \right)
    \end{equation*}
    for some wave size $\sigma_1$ (resp. $\sigma_2$).
    
    Moreover, if $\sigma_1 < 0$ (resp. $\sigma_2 < 0$), then the discontinuity
    is a shock wave and
    \begin{equation*}
      \abs{\dot x(t) - \lambda(u^-, u^+)} \le \eps
    \end{equation*}
    where $\lambda(u^-, u^+)$ denotes the velocity associated to the
    Rankine-Hugoniot condition.

    Finally, if $\sigma_1 > 0$ (resp. $\sigma_2 > 0$), then
    $\sigma_1 \le \eps$ (resp. $\sigma_2 \le \eps$), 
    the discontinuity
    is a part of a rarefaction fan and
    \begin{equation*}
      \abs{\dot x(t) - \lambda_1(u^-)} \le \eps
      \quad
      \left(\abs{\dot x(t) - \lambda_2(u^-)} \le \eps\right).
    \end{equation*}

  \item For every $\ell \in \left\{1, \ldots, N\right\}$, it holds that
    \begin{equation*}
      \left\{
        \begin{array}{l}
          \norm{\left(\rho_{\ell, \eps}(0,\cdot), q_{\ell, \eps}(0,\cdot) \right)
            - \left(\rho_{0, \ell}, q_{0, \ell}\right)}_{L^1 (I_\ell)}
          <\eps ,
          \vspace{.2cm}\\
          \mathrm{TV}\left(\rho_{\ell, \eps}(0, \cdot), q_{\ell, \eps}(0, \cdot)
          \right)
          \le \mathrm{TV}\left( \rho_{0, \ell}, q_{0, \ell}\right).
        \end{array}
      \right.
    \end{equation*}
    
  \item For a.e. $t \in \R^{+}$,
    \begin{equation}
      \label{eq:coupling-conditions-wft}
      \left\{
        \begin{array}{l}
          \displaystyle
          \sum_{\ell = 1}^N \norm{\nu_\ell} q_{\ell, \eps}(t, 0^+) = 0,
          \\
          P\left(\rho_{1, \eps} (t, 0^+), q_{1, \eps} (t, 0^+)\right)
          = P\left(\rho_{2, \eps} (t, 0^+), q_{2, \eps} (t, 0^+)\right),
          \\
          \vdots
          \\
          P\left(\rho_{N-1, \eps} (t, 0^+), q_{N-1, \eps} (t, 0^+)\right)
          = P\left(\rho_{N, \eps} (t, 0^+), q_{N, \eps} (t, 0^+)\right),
          \\
          \displaystyle
          \sum_{\ell = 1}^N \norm{\nu_\ell}
          F\left(\rho_{\ell, \eps}(t, 0^+), q_{\ell, \eps}(t, 0^+)\right) \le 0.
        \end{array}
      \right.
    \end{equation}

  \item For every $\ell \in \left\{1, \ldots, N\right\}$ and for a.e.
    $t > 0$
    \begin{equation*}
      v_2\left(\rho_{\ell, \eps} (t, 1), q_{\ell, \eps}(t, 1)\right)
      = k_\ell v_1\left(\rho_{\ell, \eps}(t, 1^-), q_{\ell, \eps}(t, 1^-)\right)
      - k_\ell v_1\left(\bar u_\ell\right) +
      v_2\left(\bar u_\ell\right).
    \end{equation*}
  \end{enumerate}
\end{definition}

We briefly review how to construct a wave-front tracking approximate solution
in the sense of \Cref{def:epswf}.

Define $\bar u_\ell = \left(\bar \rho_\ell, \bar q_\ell\right)$ for every
$\ell = 1, \ldots, N$ and choose $\bar \delta > 0$ such that
$B\left(\bar u_\ell, \bar \delta\right) \subseteq A_0$ for every
$\ell = 1, \ldots, N$.
Given $\eps > 0$, for every $\ell$,
approximate the initial condition with piecewise constant
functions $\left(\rho_{0, \ell, \eps}, q_{0, \ell, \eps}\right)$
with a finite number of discontinuities such that
\begin{equation*}
  \left\{
    \begin{array}{l}
      \norm{\left(\rho_{0, \ell, \eps}, q_{0, \ell, \eps} \right)
      - \left(\rho_{0, \ell}, q_{0, \ell}\right)}_{L^1 (0,1)}
      <\eps ,
      \vspace{.2cm}\\
      \mathrm{TV}\left(\rho_{0, \ell, \eps}, q_{0, \ell, \eps}
      \right)
      \le \mathrm{TV}\left( \rho_{0, \ell}, q_{0, \ell}\right).
    \end{array}
  \right.
\end{equation*}
Then, at the junction, at the exterior boundary, and at each point of jump
along the pipes we solve the corresponding Riemann problems.
\begin{enumerate}
\item At each discontinuity inside a pipe
  we use the accurate Riemann solver, described
  in \Cref{sssec:classical-RP}.

\item At the boundary $x=1$, we use the Riemann solver $\rsb$,
  described in \Cref{sssec:RP-external}.

\item At the junction $x=0$, we use the Riemann solver $\rsj$, described
  in \Cref{sssec:RP-junction}.
  
\end{enumerate}

We approximate each rarefaction wave by means of rarefaction fans.

This construction can be extended up to a first time $\bar t_1$ at which
two waves interact in a duct or a wave hits the junction or the external
boundary.
Since at time $\bar t_1$ the approximation functions are piecewise
constant with a finite number of discontinuities we can repeat the previous
construction up to e second time $\bar t_2$ of interaction and so on.
In this construction we impose that any rarefaction fan is not split
any further and, without loss of generality, we assume that no more
of two waves interact at the same point in a pipe and no more of one wave
interacts at same time with the junction or with the external boundary.

\subsubsection{The classical Riemann problem}
\label{sssec:classical-RP}
Assuming, without loss of generality, that a pipe is modeled by the real
line $\R$, we consider the Riemann problem
\begin{equation}
  \label{eq:RP-classical}
  \left\{
    \begin{array}{ll}
      \pt \rho + \px q = 0,
      & t>0, \, x \in \R,
      \\
      \pt q + \px P(\rho, q) = 0,
      & t>0, \, x \in \R,
      \\
      \left(\rho(0,x), q(0,x)\right)
      =
      \left\{
      \begin{array}{ll}
        \left(\rho_l, q_l\right),
        & x < 0,
        \\
        \left(\rho_r, q_r\right),
        & x > 0;
      \end{array}
      \right.
    \end{array}
  \right.
\end{equation}
see~\cite{MR1816648, MR3468916} for more details. We denote with
\begin{equation}
  \label{eq:Riemann-solver-classic}
  \begin{array}{rccc}
    {\rsc}^{\textrm{acc}}:
    & \left(\R^+ \times \R\right)^2
    & \longrightarrow
    & \R^2
    \\
    & \left(\left(\rho_l, q_l\right), \left(\rho_r, q_r\right)\right)
    & \longmapsto
    &
      \left(\sigma_1, \sigma_2\right)
  \end{array}
\end{equation}
the accurate Riemann solvers, which is well defined provided $(\rho_r, q_r)$ sufficiently closed
to $(\rho_l, q_l)$;
see~\cite[Chapter~7.2]{MR1816648} for a complete construction.

\subsubsection{The Riemann problem at the junction}
\label{sssec:RP-junction}
Assuming that we have $N$ pipes, modeled by the semiline
line $(0, +\infty)$, we consider the Riemann problem
\begin{equation}
  \label{eq:RP-junction}
  \left\{
    \begin{array}{ll}
      \pt \rho_1 + \px q_1 = 0,
      & t>0, \, x > 0,
      \\
      \pt q_1 + \px P(\rho_1, q_1) = 0,
      & t>0, \, x > 0,
      \\
      \vdots
      \\
      \pt \rho_N + \px q_N = 0,
      & t>0, \, x > 0,
      \\
      \pt q_N + \px P(\rho_N, q_N) = 0,
      & t>0, \, x > 0,
      \\
      \left(\rho_1(0,x), q_1(0,x)\right)
      =
      \left(\rho_{0,1}, q_{0,1}\right),
      & x > 0,
      \\
      \vdots
      \\
      \left(\rho_N(0,x), q_N(0,x)\right)
      =
      \left(\rho_{0,N}, q_{0,N}\right),
      & x > 0.
    \end{array}
  \right.
\end{equation}
We denote with
\begin{equation}
  \label{eq:Riemann-solver-junction}
  \begin{array}{rccc}
    \rsj:
    & \left(\R^+ \times \R\right)^N
    & \longrightarrow
    & \R^N
    \\
    & \left(\left(\rho_{0,1}, q_{0,1}\right), \ldots,
      \left(\rho_{0,N}, q_{0,N}\right)\right)
    & \longmapsto
    &
      \left(\sigma_{2,1}, \ldots, \sigma_{2, N}\right)
  \end{array}
\end{equation}
the Riemann solver for~\cref{eq:RP-junction}. More precisely, the map $\rsj$
gives, in each pipe, the strengths of the waves of the second family
generated by~\cref{eq:RP-junction}; see~\cite[Theorem~2]{MR2247787}.

\subsubsection{The Riemann problem at the external boundary}
\label{sssec:RP-external}
Assuming, without loss of generality, that a pipe is modeled by the semiline
line $(-\infty, 1)$, we consider the Riemann problem
\begin{equation}
  \label{eq:RP-external}
  \left\{
    \begin{array}{ll}
      \pt \rho + \px q = 0,
      & t>0, \, x < 1,
      \\
      \pt q + \px P(\rho, q) = 0,
      & t>0, \, x < 1,
      \\
      \left(\rho(0,x), q(0,x)\right)
      =
      \left(\rho_l, q_l\right),
      & x < 1.
    \end{array}
  \right.
\end{equation}
For $k > 0$ and $\bar u \in \R^+ \times \R$,
we denote with
\begin{equation}
  \label{eq:Riemann-solver-classic}
  \begin{array}{rccc}
    \rsb:
    & \left(\R^+ \times \R\right)^2 \times (0,1)
    & \longrightarrow
    & \R
    \\
    & \left(\left(\rho_l, q_l\right), \bar u, k\right)
    & \longmapsto
    &
      \sigma_1
  \end{array}
\end{equation}
the Riemann solver for~\cref{eq:RP-external}. More precisely, the map $\rsb$
gives the strength of the wave of the first family
generated by~\cref{eq:RP-external}, such that
$\left(\rho_r, q_r\right)
= \mathcal L_1\left(\rsb\left(\left(\rho_l, q_l\right),
    \bar u, k\right)\right) (\rho_l, q_l)$ is the trace of the solution
to~\cref{eq:RP-external} at $x=1$ and
\begin{equation*}
  v_2\left(\rho_{r}, q_{r}\right)
  = k v_1\left(\rho_{r}, q_{r}\right)
  - k v_1\left(\bar u\right) +
  v_2\left(\bar u\right).
\end{equation*}
Note that the Riemann solver $\rsb$ is well
defined, since the following result holds.

\begin{lemma}
  Fix $k > 0$ and $\bar u \in \R^+ \times \R$.
  There exists a neighborhood
  $\mathcal U \subseteq \R^+ \times \R$ of $\bar u$ with the following property.
  For every $u_l \in \mathcal U$,
  there is a unique $u_r \in \mathcal U$ such that the relations
  \begin{equation*}
    u_r = \mathcal L_1\left(\rsb\left(u_l, \bar u, k\right)\right) (u_l)
    \qquad \textrm{ and } \qquad
    v_2 \left(u_r \right) =
    v_2\left(\bar u\right)
    + k v_1\left(u_r\right)
    - k v_1\left(\bar u\right)
  \end{equation*}
  hold.
\end{lemma}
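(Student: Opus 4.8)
The plan is to reduce the problem to a one-dimensional root-finding equation along the first Lax curve and then apply the implicit function theorem at the equilibrium. Writing $\bar u = (\bar\rho, \bar q)$, I would first introduce the map $\Phi(\sigma, u_l) := \mathcal L_1(\sigma)(u_l)$. Since the first characteristic field is genuinely nonlinear by \eqref{eq:r-orientation}, this map is of class $C^2$ in $\sigma$ and smooth in the base point $u_l$; it satisfies $\Phi(0, u_l) = u_l$ and has tangent
\begin{equation*}
  \partial_\sigma \Phi(0, u_l) = \frac{r_1(u_l)}{\nabla\lambda_1(u_l)\cdot r_1(u_l)}
\end{equation*}
at $\sigma = 0$, the denominator being strictly positive again by \eqref{eq:r-orientation}. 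The requirement $u_r = \mathcal L_1(\sigma_1)(u_l)$ is then met by taking $u_r = \Phi(\sigma, u_l)$ with $\sigma$ a free parameter, and the feedback boundary condition becomes the scalar equation $G(\sigma, u_l) = 0$, where
\begin{equation*}
  G(\sigma, u_l) := v_2\bigl(\Phi(\sigma, u_l)\bigr) - k\, v_1\bigl(\Phi(\sigma, u_l)\bigr) + k\, v_1(\bar u) - v_2(\bar u).
\end{equation*}

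Next I would check that the equilibrium furnishes the trivial solution at the base point: a direct substitution gives $G(0, \bar u) = v_2(\bar u) - k v_1(\bar u) + k v_1(\bar u) - v_2(\bar u) = 0$, so $(\sigma, u_l) = (0, \bar u)$ is a root corresponding to $u_r = \bar u$. The crucial computation is the $\sigma$-derivative at this point. Combining the chain rule with the tangent formula above and the orthogonality relations \eqref{eq:inviariant-orthogonality}, namely $\nabla v_1 \cdot r_1 = 0$ and $\nabla v_2 \cdot r_1 = 2\sqrt{p'(\rho)}$, I obtain
\begin{equation*}
  \partial_\sigma G(0, \bar u) = \frac{\bigl(\nabla v_2(\bar u) - k\,\nabla v_1(\bar u)\bigr)\cdot r_1(\bar u)}{\nabla\lambda_1(\bar u)\cdot r_1(\bar u)} = \frac{2\sqrt{p'(\bar\rho)}}{c(\bar\rho) + \bar\rho\, c'(\bar\rho)} > 0.
\end{equation*}
In particular the coefficient $k$ drops out and the derivative is strictly positive for every $k$.

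Since $G$ is $C^1$ jointly in $(\sigma, u_l)$ with $G(0,\bar u) = 0$ and $\partial_\sigma G(0, \bar u) \neq 0$, the implicit function theorem yields a neighborhood $\mathcal V$ of $\bar u$ and a unique $C^1$ map $u_l \mapsto \sigma_1(u_l)$ with $\sigma_1(\bar u) = 0$ and $G(\sigma_1(u_l), u_l) = 0$; setting $\rsb(u_l, \bar u, k) := \sigma_1(u_l)$ and $u_r := \Phi(\sigma_1(u_l), u_l)$ produces the desired state. By continuity of $\Phi$ and of $\sigma_1$, restricting $\mathcal V$ to a smaller neighborhood $\mathcal U$ guarantees $u_r \in \mathcal U$ whenever $u_l \in \mathcal U$. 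The step I expect to be the main obstacle is upgrading the local uniqueness of $\sigma_1$ to genuine uniqueness of $u_r$ \emph{within} $\mathcal U$, rather than merely along a short arc of the Lax curve: this is handled by observing that, since $\partial_\sigma G(0,\bar u) > 0$, after possibly shrinking $\mathcal U$ the map $\sigma \mapsto G(\sigma, u_l)$ is strictly increasing on the relevant $\sigma$-range \emph{uniformly} in $u_l \in \mathcal U$, so that $\sigma_1(u_l)$ is the only value keeping $\Phi(\sigma, u_l)$ inside $\mathcal U$. Care must be taken to choose the neighborhood uniformly in the base point so that this monotonicity, and hence uniqueness, holds simultaneously for all admissible left states.
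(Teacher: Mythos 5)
Your proposal is correct and follows essentially the same route as the paper: both reduce the boundary condition to a scalar equation $G(\sigma,u_l)=0$ along the first Lax curve, verify that it vanishes at $(\sigma,u_l)=(0,\bar u)$, use the orthogonality relations \eqref{eq:inviariant-orthogonality} to show $\partial_\sigma G(0,\bar u)=\nabla v_2(\bar u)\cdot r_1(\bar u)\big/\bigl(\nabla\lambda_1(\bar u)\cdot r_1(\bar u)\bigr)>0$, and conclude by the implicit function theorem. Your additional remarks on the normalization of the Lax-curve tangent and on the uniform monotonicity needed for uniqueness within $\mathcal U$ are accurate refinements of the same argument rather than a different method.
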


\begin{proof}
  Consider the function
  \begin{equation*}
    \Psi(u, \sigma) \coloneqq  v_2\left(\mathcal L_1\left(\sigma\right)(u)\right)
    - k v_1\left(\mathcal L_1\left(\sigma\right)(u)\right)
    - v_2\left(\bar u\right) + k v_1\left(\bar u\right).
  \end{equation*}
  We clearly have that $\Psi\left(\bar u, 0\right) = 0$. Moreover,
  \begin{equation*}
    \partial_\sigma \Psi\left(u, \sigma\right)
    = \left(\nabla v_2\left(\mathcal L_1\left(\sigma\right)(u)\right)
    - k \nabla v_1\left(\mathcal L_1\left(\sigma\right)(u)\right) \right)
    \cdot \frac{d}{d\sigma} \mathcal L_1\left(\sigma\right)(u)
  \end{equation*}
  and so, by~\cref{eq:inviariant-orthogonality},
  \begin{equation*}
    \partial_\sigma \Psi\left(\bar u, 0\right)
    = \left(\nabla v_2\left(\bar u\right)
    - k \nabla v_1\left(\bar u\right) \right)
    \cdot r_1 (\bar u)
    = \nabla v_2\left(\bar u\right)
    \cdot r_1 (\bar u) \ne 0.
  \end{equation*}

  The Implicit Function Theorem permits to conclude.
  The $C^1$-regularity of $\Psi$ follows from the regularity result
  established in \cite[Chapter 5, Section 5.2, Eq. (5.38)]{MR1816648}.
\end{proof}

\subsection{Glimm functionals}
\label{ssec:glimm}

First, let us introduce the definition of approaching waves in the case
of the $p$-system. For the general definition
see~\cite[Chapter~7.3]{MR1816648} or \cite[Definition 7]{MR3931112}.

\begin{definition}
  \label{def:approaching}
  The waves $\sigma_{\ell,i,\alpha}$ and $\sigma_{\ell, j, \beta}$ are
  said to be \emph{approaching} if $x_\alpha < x_\beta$ and $i>j$ or
  if $i=j$ and 
  $\min\{\sigma_{\ell,i,\alpha}, \sigma_{\ell, j,\beta}\}<0$.
\end{definition}

For a given $\gamma > 0$, we introduce the following weighted
functionals
\begin{equation}
  \label{eq:V_weight}
  V_\gamma(t) = \sum_{\ell=1}^{n} \sum_{\alpha \in \mathcal{J}_{\ell}}
  \left( 2 K_{J}\left|\sigma_{\ell, 1, \alpha}\right|e^{\gamma x_\alpha}
    +\left|\sigma_{\ell, 2, \alpha}\right|e^{-\gamma x_\alpha}
    \right),
\end{equation}
where $x_\alpha$ denotes the position of the discontinuity.
Note that, for every $\ell \in \{1, \ldots, n\}$ and
$\alpha \in \mathcal{J}_\ell$, either $\sigma_{\ell, 1, \alpha} = 0$
or $\sigma_{\ell, 2, \alpha} = 0$, since every discontinuity corresponds to a wave of the first family or to a wave of the second family.
Moreover, we define the quadratic weighted functionals
\begin{equation}
  \label{eq:Q_weight}
  Q_\gamma = Q_{\gamma}^{1,1} + Q_{\gamma}^{2,2} + Q_{\gamma}^{1,2},
\end{equation} 
where
\begin{align*} 
  Q_\gamma^{1,1}(t)
  & =\sum_{\ell=1}^{n} \sum\left\{\left|\sigma_{\ell, 1, \alpha}e^{\gamma x_\alpha}
    \sigma_{\ell, 1, \beta}e^{\gamma x_\beta}\right|:
    \left(\sigma_{\ell, 1, \alpha}, \sigma_{\ell, 1, \beta}\right)
    \in \mathcal{A}_{\ell}\right\},
  \\
  Q_\gamma^{2,2}(t)
  & =\sum_{\ell=1}^{n} \sum\left\{\left|\sigma_{\ell, 2, \alpha}
    e^{-\gamma x_\alpha} \sigma_{\ell, 2, \beta}e^{-\gamma x_\beta}\right|:
    \left(\sigma_{\ell, 2, \alpha}, \sigma_{\ell, 2, \beta}\right)
    \in \mathcal{A}_{\ell}\right\},
  \\
  Q_\gamma^{1,2}(t)
  & =\sum_{\ell=1}^{n} \sum\left\{\left|\sigma_{\ell, 1, \alpha}
    e^{\gamma x_\alpha} \sigma_{\ell, 2, \beta}e^{-\gamma x_\beta}\right|
    :\left(\sigma_{\ell, 1, \alpha}, \sigma_{\ell, 2, \beta}\right)
    \in \mathcal{A}_{\ell}\right\},
\end{align*}
where, as in \cite[Section 7.3]{CocliteGaravelloPiccoli},
\(\mathcal{A}_{\ell}\) denotes the set of approaching waves in
the \(\ell\)-th pipe, and $\mathcal J_\ell$ is the set of jumps of
the solution in the in the \(\ell\)-th pipe.
Finally, for $\kappa > 0$, we define
\begin{equation}
  \label{eq:J-exp}
  J_\gamma(t) = V_\gamma(t)+ \kappa \, Q_\gamma(t).
\end{equation}

\begin{remark}
    The functional $J_\gamma$ is a Glimm-type functional
    with exponential weights inspired by the one proposed
    in~\cite{MR3567480} for the stabilization of a $2\times2$
    system with strictly positive velocities.
    It is essentially composed by a \textsl{linear} part,
    namely $V_\gamma$, and by the \textsl{quadratic} part $Q_\gamma$.
    
    The term $V_\gamma$ measures the total strength of the waves of
    a wave-front tracking approximate solution.
    The strengths of the waves of the first family are weighted
    by the constant $2K_J$ for controlling the total variation
    increment due to waves' interactions with the junction.
    Moreover, the exponential terms produce an exponential decay
    of $V_\gamma$ along a wave-front tracking approximate solution.
    Note that $V_\gamma$ is equivalent to the total variation
    of the approximate solution;
    see for example~\cite[Proposition 4.3]{CocliteGaravelloPiccoli}.

    Finally, $Q_\gamma$ is a quadratic interaction potential
    and is composed by three parts, namely $Q_\gamma^{h_1, h_2}$
    for $h_1, h_2 \in \{1, 2 \}$.
    The term $Q_\gamma^{h_1, h_2}$ considers all the possible
    interactions between waves of the $h_1$ and $h_2$ families.
\end{remark}

\subsection{Interaction estimates}
\label{ssec:interaction-estimates}

First, we recall the interaction estimates inside a duct;
see~\cite[Lemma 7.2]{MR1816648}
and~\cite[Lemma 4.1 \& Proposition 4.2]{ColomboGaravello1}.

\begin{lemma}
  \label{lm:inter}
  There exists a constant \(K\) with the following property.
  \begin{enumerate}
  \item If there is an interaction in a duct between two waves
    \(\sigma_1^{-}\)and \(\sigma_2^{-}\), respectively of the first
    and the second family, producing the waves \(\sigma_1^{+}\)and
    \(\sigma_2^{+}\), then
    \begin{align*}
      \left|\sigma_1^{+}-\sigma_1^{-}\right|+\left|\sigma_2^{+}-\sigma_2^{-}
      \right| \leq K \cdot\left|\sigma_1^{-} \sigma_2^{-}\right|;
    \end{align*}
    see \Cref{fig:interactions-i} (left).

  \item If there is an interaction in a duct between two waves
    \(\sigma_i^{\prime}\) and \(\sigma_i^{\prime \prime}\) of the same
    \(i\)-th family producing waves of total size \(\sigma_1^{+}\)and
    \(\sigma_2^{+}\), then
    \begin{align*}
      \begin{array}{ll}
        \left|\sigma_1^{+}-\left(\sigma_1^{\prime \prime}+\sigma_1^{\prime}
        \right)\right|+\left|\sigma_2^{+}\right| \leq K \cdot\left|
        \sigma_1^{\prime} \sigma_1^{\prime \prime}\right|
        & \text { if } i=1,
        \\
        \left|\sigma_1^{+}\right|+\left|\sigma_2^{+}-\left(
        \sigma_2^{\prime \prime}+\sigma_2^{\prime}\right)\right|
        \leq K \cdot\left|\sigma_2^{\prime} \sigma_2^{\prime \prime}\right|
        & \text { if } i=2 ;
      \end{array}
    \end{align*}
    see \Cref{fig:interactions-i} (middle and right).

  \end{enumerate}
\end{lemma}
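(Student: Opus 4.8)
Since this lemma merely restates the classical Glimm-type interaction estimates for a genuinely nonlinear $2\times 2$ system, the plan is to reduce both parts to the local regularity of the Lax curves together with a second-order Taylor (Hadamard) argument. Throughout, all base states are taken in a fixed small neighborhood $\mathcal N$ of the equilibrium values $\bar u_\ell$, so the single constant $K$ will arise as the supremum of a continuous coefficient over the compact set $\overline{\mathcal N}$.

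First I would record the regularity input. Under assumption~\cref{ass:p} the flux is $C^2$ and both fields are genuinely nonlinear, so—by the standard second-order contact of the shock and rarefaction branches at $\sigma=0$, as in the regularity result cited from \cite[Chapter 5]{MR1816648}—each Lax curve $\sigma \mapsto \mathcal L_k(\sigma)(u)$ is $C^2$, with $\partial_\sigma \mathcal L_k(0)(u)$ a nonzero multiple of $r_k(u)$. Consequently the composite gluing map
\[
\Theta(u;\sigma_1,\sigma_2) := \mathcal L_2(\sigma_2)\circ \mathcal L_1(\sigma_1)(u)
\]
is $C^2$, and since its Jacobian in $(\sigma_1,\sigma_2)$ at the origin has columns proportional to $r_1(u)$ and $r_2(u)$—independent by strict hyperbolicity—it is a local $C^2$-diffeomorphism onto a neighborhood of $u$, uniformly for $u\in\mathcal N$.

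For part (i), I would set up the gluing equation. Before the interaction the states satisfy $u_m=\mathcal L_2(\sigma_2^-)(u_l)$ and $u_r=\mathcal L_1(\sigma_1^-)(u_m)$, while afterwards the Riemann problem between $u_l$ and $u_r$ is resolved by $u_r=\Theta(u_l;\sigma_1^+,\sigma_2^+)$. Inverting $\Theta$ yields the outgoing strengths $(\sigma_1^+,\sigma_2^+)$ as a $C^2$ function of $(\sigma_1^-,\sigma_2^-)$. The decisive observation is the behaviour on the coordinate axes: if $\sigma_2^-=0$ then $u_r=\mathcal L_1(\sigma_1^-)(u_l)$, forcing $\sigma_1^+=\sigma_1^-$ and $\sigma_2^+=0$, and symmetrically for $\sigma_1^-=0$. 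Hence the error functions $\sigma_1^+-\sigma_1^-$ and $\sigma_2^+-\sigma_2^-$ vanish whenever either argument vanishes; their second-order Taylor expansions at the origin therefore carry no pure $(\sigma_1^-)^2$ or $(\sigma_2^-)^2$ term, leaving only the mixed term, so each is $O(|\sigma_1^-\sigma_2^-|)$. Bounding the continuous remainder coefficient over $\overline{\mathcal N}$ gives $|\sigma_1^+-\sigma_1^-|+|\sigma_2^+-\sigma_2^-|\le K|\sigma_1^-\sigma_2^-|$.

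Part (ii) runs through the identical mechanism with a same-family composite. For $i=1$ one sets $u_r=\mathcal L_1(\sigma_1'')\circ\mathcal L_1(\sigma_1')(u_l)=\Theta(u_l;\sigma_1^+,\sigma_2^+)$ and reads off $(\sigma_1^+,\sigma_2^+)$ as a $C^2$ function of $(\sigma_1',\sigma_1'')$; when $\sigma_1''=0$ the composite reduces to $\mathcal L_1(\sigma_1')(u_l)$, so $\sigma_1^+=\sigma_1'$ and $\sigma_2^+=0$, and likewise when $\sigma_1'=0$. Thus $\sigma_1^+-(\sigma_1'+\sigma_1'')$ and $\sigma_2^+$ both vanish on the axes, the same Taylor argument produces the factor $\sigma_1'\sigma_1''$, and the stated bound follows; the case $i=2$ is identical after exchanging the two families. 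I expect the main obstacle to be purely the regularity bookkeeping—guaranteeing that the shock and rarefaction branches meet with genuine second-order contact so that $\Theta$ (and hence its inverse) is $C^2$ and the pure quadratic terms really cancel—together with extracting one constant $K$ valid for all admissible base states, which is secured by the compactness of $\overline{\mathcal N}$ around the equilibria.
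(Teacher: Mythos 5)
Your argument is correct and coincides with the standard proof of the Glimm interaction estimates in \cite[Lemma 7.2]{MR1816648}, which is exactly what the paper invokes for this lemma without reproducing a proof: express the outgoing strengths as a $C^2$ function of the incoming ones via the gluing map $\Theta$, observe that the error terms vanish on both coordinate axes, and conclude a bound by the mixed product. The only cosmetic point is that passing from ``the pure quadratic Taylor terms vanish'' to the bound $O(|\sigma_1^-\sigma_2^-|)$ is cleaner via the identity $\Phi(\sigma_1,\sigma_2)=\int_0^{\sigma_1}\!\int_0^{\sigma_2}\partial^2_{12}\Phi\,$ for a $C^2$ function vanishing on both axes, which yields the constant $K=\sup\left|\partial^2_{12}\Phi\right|$ directly, whereas a second-order Taylor remainder is a priori only $o(|\sigma|^2)$ and not pointwise controlled by $|\sigma_1^-\sigma_2^-|$.
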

As a consequence we deduce the monotonicity of the functional
$J_\gamma$, defined in~\cref{eq:J-exp}.

\begin{figure}[h]
	\begin{tikzpicture}[>={Stealth[length=3mm, width=1.8mm]}]]
	
	\begin{scope}
	
	\draw (2,2) -- node [right] {$\sigma^+_2$} +(60:1.75) node [right] {2};
	\draw (2,2) -- node [right] {$\sigma^-_1$} +(-60:1.75) node [right] {1};
	\draw (2,2) -- node [left] {$\sigma_1^+$} +(120:1.75) node [left] {1};
	\draw (2,2) -- node [left] {$\sigma_2^-$} +(-120:1.75) node [left] {2};
	\draw [dashed, red] (2,2) -- +(180:1.6) node [left] {$\bar{t}$};
	\draw [dashed, red] (2,2) -- +(270:1.6) node [below] {$\bar{x}$};
	\fill [red] (2,2) circle (1.5pt); 
	\draw [thick] (0,0) rectangle (4,4);
	\end{scope}

	\begin{scope}[xshift = 5cm]
	
	\draw (2,2) -- node [right] {$\sigma_2^+$} +(60:1.75) node [right] {2};
	\draw (2,2) -- node [right] {$\sigma_2''$} +(-100:1.75) node [left] {2};
	\draw (2,2) -- node [left] {$\sigma_1^+$} +(120:1.75) node [left] {1};
	\draw (2,2) -- node [left] {$\sigma_2'$} +(-120:1.75) node [left] {2};
	\draw [dashed, red] (2,2) -- +(180:1.6) node [left] {$\bar{t}$};
	\draw [dashed, red] (2,2) -- +(270:1.6) node [below] {$\bar{x}$};
	\fill [red] (2,2) circle (1.5pt); 
	\draw [thick] (0,0) rectangle (4,4);
	\end{scope}
	
	\begin{scope}[xshift = 10cm]
	
	\draw (2,2) -- node [right] {$\sigma_2^+$} +(60:1.75) node [right] {2};
	\draw (2,2) -- node [right] {$\sigma_1''$} +(-60:1.75) node [right] {1};
	\draw (2,2) -- node [left] {$\sigma_1^+$} +(120:1.75) node [left] {1};
	\draw (2,2) -- node [left] {$\sigma_1'$} +(-80:1.75) node [right] {1};
	\draw [dashed, red] (2,2) -- +(180:1.6) node [left] {$\bar{t}$};
	\draw [dashed, red] (2,2) -- +(270:1.6) node [below] {$\bar{x}$};
	\fill [red] (2,2) circle (1.5pt); 
	\draw [thick] (0,0) rectangle (4,4);
	\end{scope}
	
	\end{tikzpicture}
	\caption{Wave-front interactions at $\bar x \in (0,1)$ at $t = \bar t$.}
		\label{fig:interactions-i}
\end{figure}
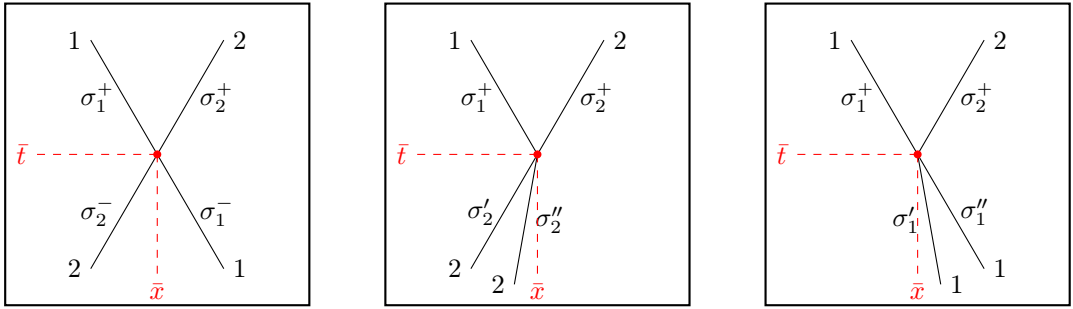

\begin{corollary}
    \label{cor:J-dec-pipe}
    Consider $\kappa > 4 K K_J (e^\gamma + e^{3\gamma})$.
    Assume that at time $\bar t$ there is an interaction
    in a tube between two waves. Then
    \begin{equation}
        \Delta J_\gamma(\bar t) < 0.
    \end{equation}
\end{corollary}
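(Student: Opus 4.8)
The plan is to evaluate the jump $\Delta J_\gamma(\bar t) = J_\gamma(\bar t^+) - J_\gamma(\bar t^-) = \Delta V_\gamma(\bar t) + \kappa\,\Delta Q_\gamma(\bar t)$ by comparing the fronts just before and just after the interaction at the point $(\bar x,\bar t)$, $\bar x\in(0,1)$. Since the two incoming fronts and the (at most two) outgoing fronts all issue from this single point, in the limits $t\to\bar t^\mp$ every front that enters the computation sits at $x=\bar x$, so each exponential weight collapses to $a:=e^{\gamma\bar x}$ for first-family fronts and $b:=e^{-\gamma\bar x}$ for second-family fronts, with $ab=1$, $1\le a\le e^\gamma$ and $e^{-\gamma}\le b\le 1$. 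I would then treat separately the three admissible interaction types, namely the $1$-$2$ interaction (controlled by \cref{lm:inter}~(1)) and the same-family $1$-$1$ and $2$-$2$ interactions (controlled by \cref{lm:inter}~(2)), and retain the worst resulting constant.

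For the linear part, the bound on $\Delta V_\gamma$ follows directly from \cref{lm:inter}. The binding case turns out to be the $2$-$2$ interaction between approaching fronts $\sigma_2'$, $\sigma_2''$ (so $\min\{\sigma_2',\sigma_2''\}<0$) that produces an outgoing second-family front $\sigma_2^+$ and a spurious first-family front $\sigma_1^+$; using $|\sigma_2^+|-|\sigma_2'|-|\sigma_2''|\le|\sigma_2^+-(\sigma_2'+\sigma_2'')|$ together with \cref{lm:inter}~(2) gives
\[
  \Delta V_\gamma(\bar t)\le 2K_J a\,|\sigma_1^+|+b\,\bigl|\sigma_2^+-(\sigma_2'+\sigma_2'')\bigr|\le K\,(b+2K_J a)\,|\sigma_2'\sigma_2''|.
\]
The $1$-$2$ and $1$-$1$ cases are handled in the same way and yield bounds of identical shape with strictly smaller effective constants.

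For the quadratic part, I would first isolate the self-interaction term. Before the interaction the pair $(\sigma_2',\sigma_2'')$ is approaching and contributes $|\sigma_2'\sigma_2''|\,b^2$ to $Q_\gamma^{2,2}$; after the interaction the two outgoing fronts are a slower first-family front (on the left) and a faster second-family front (on the right), which diverge and are therefore \emph{not} approaching in the sense of \cref{def:approaching}, so their mutual contribution vanishes. This produces the strictly negative self-term $-|\sigma_2'\sigma_2''|\,b^2$. The remaining contributions to $\Delta Q_\gamma$ come from the interaction potential between the modified fronts and the unchanged third waves in the same pipe. Since the positions are preserved across $\bar t$, the set of approaching partners of each front is unchanged, and each such cross-term equals the change of a weighted wave strength times the weighted strength of its partners; because $|\sigma_1^+|\le K|\sigma_2'\sigma_2''|$ and $|\sigma_2^+|-|\sigma_2'|-|\sigma_2''|\le K|\sigma_2'\sigma_2''|$ (again by $|x|-|y|\le|x-y|$ and \cref{lm:inter}), every such strength change is of product order, so the cross-terms are bounded by $C\,|\sigma_2'\sigma_2''|\,V_\gamma$ with $C=C(K,K_J,\gamma)$, using the equivalence of $V_\gamma$ with the total variation. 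Choosing the threshold $\delta_0$ in the definition of $\mathcal D$ small enough that $C\,V_\gamma\le\tfrac12 b^2$ yields $\Delta Q_\gamma(\bar t)\le-\tfrac12\,|\sigma_2'\sigma_2''|\,b^2$.

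Combining the two estimates gives $\Delta J_\gamma(\bar t)\le|\sigma_2'\sigma_2''|\bigl(K(b+2K_J a)-\tfrac{\kappa}{2}b^2\bigr)$, which is negative as soon as $\kappa>2K(b+2K_J a)/b^2=2K\bigl(e^{\gamma\bar x}+2K_J e^{3\gamma\bar x}\bigr)$; taking the supremum over $\bar x\in(0,1)$ and using $K_J\ge1$ gives exactly the sufficient condition $\kappa>4KK_J(e^\gamma+e^{3\gamma})$, while the $1$-$2$ and $1$-$1$ interactions impose strictly weaker thresholds (of order $KK_J e^\gamma$), so the $2$-$2$ collision is extremal. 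The step I expect to be the main obstacle is the careful bookkeeping of $\Delta Q_\gamma$: one must check that the approaching-partner sets are genuinely unchanged (which rests on the positions being identical before and after $\bar t$), that only the spurious opposite-family front created in a same-family collision can raise the potential, and that the associated increase is truly of order $|\sigma_2'\sigma_2''|\,V_\gamma$ and hence absorbable through the smallness of $\delta_0$, so that no non-product term survives to compete with the negative self-term.
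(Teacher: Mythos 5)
Your proposal is correct and follows essentially the same route as the paper: decompose $\Delta J_\gamma=\Delta V_\gamma+\kappa\,\Delta Q_\gamma$, control $\Delta V_\gamma$ by \cref{lm:inter}, extract the strictly negative self-interaction term lost from $Q_\gamma$ (with weight $e^{-2\gamma\bar x}$ in the binding $2$--$2$ case), and absorb the remaining product-order cross-terms via the smallness of $V_\gamma$, arriving at the same threshold $\kappa>4KK_J(e^\gamma+e^{3\gamma})$. The only difference is presentational: you identify the $2$--$2$ collision as extremal and argue the other two cases by comparison, whereas the paper writes out all three cases in parallel.
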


\begin{proof}
    Assume first that the interaction happens
    at the location $\bar x$ between a wave of the first family
    with strength $\sigma_1^-$ with a wave of the second family with
    strength $\sigma_2^-$. By \Cref{lm:inter} the emerging waves $\sigma_1^+$ and 
    $\sigma_2^+$ satisfy
    \begin{equation*}
        \abs{\sigma_1^+ - \sigma_1^-} + \abs{\sigma_2^+ - \sigma_2^-}
        \le K \abs{\sigma_1^- \sigma_2^-}.
    \end{equation*}
    Then, since $V_\gamma$ is sufficiently small,
    \begin{align*}
        \Delta J_\gamma(\bar t) 
        &  = \Delta V_\gamma(\bar t) + \kappa \Delta Q_\gamma(\bar t)
        \\
        &
        \le 2K_J \abs{\sigma_1^+} e^{\gamma \bar x}
        + \abs{\sigma_2^+} e^{-\gamma \bar x} 
        - 2K_J \abs{\sigma_1^-} e^{\gamma \bar x}
        - \abs{\sigma_2^-} e^{-\gamma \bar x} 
        \\
        & \quad
        - \kappa \abs{\sigma_1^- \sigma_2^-}
        +\kappa K \abs{\sigma_1^- \sigma_2^-} e^{2 \gamma} V_\gamma(\bar t^-) 
        \\
        & \le 2 K_J (\abs{\sigma_1^+ - \sigma_1^-} e^{\gamma \bar x} + \abs{\sigma_2^+ - \sigma_2^-} e^{-\gamma \bar x})
        - \frac{\kappa}{2} \abs{\sigma_1^- \sigma_2^-} 
        \\
        & \le 2 K_J K \abs{\sigma_1^- \sigma_2^-} (e^{\gamma \bar x}
        + e^{-\gamma \bar x})
        - \frac{\kappa}{2} \abs{\sigma_1^- \sigma_2^-}
        \\
        & \le \left(2 K_J K (e^{\gamma \bar x}
        + e^{-\gamma \bar x}) - \frac{\kappa}{2}\right) 
        \abs{\sigma_1^- \sigma_2^-}
        < 0.
    \end{align*}

    Assume now that the interacting waves, $\sigma_1'$ and $\sigma_1''$, are both of the first family
    and that the interaction happens at the location $\bar x$.
    By \Cref{lm:inter} the emerging waves $\sigma_1^+$ and 
    $\sigma_2^+$ satisfy
    \begin{equation*}
        \abs{\sigma_1^+ - (\sigma_1' + \sigma_1'')} + \abs{\sigma_2^+}
        \le K \abs{\sigma_1' \sigma_1''}.
    \end{equation*}
    Then, since $V_\gamma$ is sufficiently small,
    \begin{align*}
        \Delta J_\gamma(\bar t) 
        &  = \Delta V_\gamma(\bar t) + \kappa \Delta Q_\gamma(\bar t)
        \\
        &
        \le 2K_J \abs{\sigma_1^+} e^{\gamma \bar x} 
        + \abs{\sigma_2^+} e^{-\gamma \bar x} 
        - 2K_J \abs{\sigma_1'} e^{\gamma \bar x}
        - 2 K_J\abs{\sigma_1''} e^{\gamma \bar x} 
        \\
        & \quad
        - \kappa \abs{\sigma_1' \sigma_1''} e^{2\gamma \bar x}
        +\kappa K\abs{\sigma_1' \sigma_1''} e^{2\gamma \bar x}  V_\gamma(\bar t^-) 
        \\
        & \le 2K_J (\abs{\sigma_1^+ - \sigma_1' - \sigma_1''}
        e^{\gamma \bar x} + \abs{\sigma_2^+} e^{-\gamma \bar x})
        - \frac{\kappa}{2} \abs{\sigma_1' \sigma_1''} e^{2 \gamma \bar x} 
        \\
        & \le 2 K_J K \abs{\sigma_1' \sigma_1''} (e^{\gamma \bar x}+ e^{-\gamma \bar x})
        - \frac{\kappa}{2} \abs{\sigma_1' \sigma_1''} e^{2\gamma \bar x} 
        \\
        & \le \left( 2 K_J K (e^{\gamma \bar x}+ e^{-\gamma \bar x}) - \frac{\kappa}{2}e^{2\gamma \bar x}\right) \abs{\sigma_1' \sigma_1''}
        < 0.
    \end{align*}

    Assume now that the interacting waves, $\sigma_2'$ and $\sigma_2''$, are both of the second family
    and that the interaction happens at the location $\bar x$.
    By \Cref{lm:inter}, the emerging waves $\sigma_1^+$ and 
    $\sigma_2^+$ satisfy
    \begin{equation*}
        \abs{\sigma_1^+} + \abs{\sigma_2^+ - (\sigma_2' + \sigma_2'')}
        \le K \abs{\sigma_2' \sigma_2''}.
    \end{equation*}
    Then, since $V_\gamma$ is sufficiently small,
    \begin{align*}
        \Delta J_\gamma(\bar t) 
        &  = \Delta V_\gamma(\bar t) + \kappa \Delta Q_\gamma(\bar t)
        \\
        &
        \le 2K_J \abs{\sigma_1^+} e^{\gamma \bar x} 
        + \abs{\sigma_2^+} e^{-\gamma \bar x} 
        - 2K_J \abs{\sigma_2'} e^{-\gamma \bar x}
        - 2 K_J\abs{\sigma_2''} e^{-\gamma \bar x} 
        \\
        & \quad
        - \kappa \abs{\sigma_2' \sigma_2''} e^{-2\gamma \bar x}
        +\kappa K\abs{\sigma_2' \sigma_2''} e^{-2\gamma\bar x }  V_\gamma(\bar t^-) 
        \\
        & \le 2K_J (\abs{\sigma_1^+} e^{\gamma \bar x} + \abs{\sigma_2^+ - \sigma_2' - \sigma_2''}
        e^{-\gamma \bar x})
        - \frac{\kappa}{2} \abs{\sigma_2' \sigma_2''} e^{-2 \gamma \bar x} 
        \\
        & \le 2 K_J K \abs{\sigma_2' \sigma_2''} (e^{\gamma \bar x}+ e^{-\gamma \bar x})
        - \frac{\kappa}{2} \abs{\sigma_1' \sigma_1''} e^{-2\gamma \bar x} 
        \\
        & \le \left( 2 K_J K (e^{\gamma \bar x}+ e^{-\gamma \bar x}) - \frac{\kappa}{2}e^{-2\gamma \bar x}\right) \abs{\sigma_2' \sigma_2''}
        < 0,
    \end{align*}   
    concluding the proof.
\end{proof}
    
The following result deals with the interactions of waves with the junction.
\begin{lemma}
  \label{lem:interaction-junction}
  There exist \(\delta_J>0\) and \(K_J \geq 1\) with the following property.
    For any \(\bar{u} \in(\mathring{A}_{0})^{N}\) that yields a equilibrium solution
    to the Riemann problem, for any 1-waves \(\sigma_l^{-} \in (-\delta_J, \delta_J)\) hitting the junction and producing the 2-waves
    \(\sigma_l^{+}\), it holds that
    \begin{align*}
      \sum_{\ell=1}^N\left|\sigma_\ell^{+}\right| \leq K_J \cdot \sum_{\ell=1}^n\left|
      \sigma_\ell^{-}\right|;
    \end{align*}
    see \Cref{fig:interactions-j}.
\end{lemma}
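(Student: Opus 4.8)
The plan is to realize the map $\sigma^- \mapsto \sigma^+$ as a composition of smooth maps that vanishes at the equilibrium, and then to read off the claimed linear bound from its Lipschitz constant. In the subsonic regime $\mathring A_0$ we have $\lambda_1 < 0 < \lambda_2$, so the waves reaching the junction are exactly the $1$-waves (negative speed), while the junction can only emit $2$-waves (positive speed). Once the incoming $1$-waves of strengths $\sigma_\ell^-$ have reached the junction, the datum adjacent to it in the $\ell$-th pipe is the perturbed state $u_\ell^- = \mathcal L_1(\sigma_\ell^-)(\bar u_\ell)$, and the outgoing strengths are $(\sigma_1^+, \ldots, \sigma_N^+) = \rsj(u_1^-, \ldots, u_N^-)$. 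Since $\sigma_\ell^- \mapsto u_\ell^-$ is $C^1$ with $u_\ell^- = \bar u_\ell$ at $\sigma_\ell^- = 0$, and since $\bar u$ already satisfies the coupling conditions so that $\rsj(\bar u_1, \ldots, \bar u_N) = 0$, the composite $G \colon \sigma^- \mapsto \sigma^+$ is $C^1$ (in particular Lipschitz) with $G(0) = 0$.

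The first step is to establish that $\rsj$ is well defined and $C^1$ near $\bar u$, which is the content of \cite[Theorem~2]{MR2247787}; alternatively it follows from the Implicit Function Theorem applied directly. Parametrizing the junction states by the outgoing strengths through $u_\ell^- = \mathcal L_2(\sigma_\ell^+)(\hat u_\ell)$ and writing the mass-conservation and equal-dynamic-pressure coupling conditions of \Cref{def:cauchy-feedback} as
\begin{equation*}
  \Phi(\sigma^+, u^-) = \Bigl( \sum_{\ell=1}^N \norm{\nu_\ell}\, \hat q_\ell, \; P(\hat u_1) - P(\hat u_2), \ldots, P(\hat u_{N-1}) - P(\hat u_N) \Bigr) = 0,
\end{equation*}
one computes, using $\partial_{\sigma_\ell^+} \hat u_\ell = -r_2(\bar u_\ell)$ at equilibrium together with \cref{eq:dynamic-pressure}, that
\begin{equation*}
  \nabla P(\bar u_\ell) \cdot r_2(\bar u_\ell) = \bar\rho_\ell\, \lambda_2(\bar u_\ell)^2 > 0, \qquad \bigl(r_2(\bar u_\ell)\bigr)_q = \bar q_\ell + \bar\rho_\ell\, c(\bar\rho_\ell) > 0,
\end{equation*}
the strict positivity being exactly the subsonic condition $\lambda_2(\bar u_\ell) > 0$. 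The matrix $\partial_{\sigma^+}\Phi(0, \bar u)$ is then a bordered bidiagonal matrix whose first row has all entries of one sign and whose remaining rows encode $b_\ell x_\ell = b_{\ell+1} x_{\ell+1}$ with $b_\ell = \bar\rho_\ell \lambda_2(\bar u_\ell)^2 > 0$; a one-line kernel computation (all $b_\ell x_\ell$ equal, forced to $0$ by the positive first row) shows it is invertible, so the Implicit Function Theorem yields a $C^1$ solver $\sigma^+ = \sigma^+(u^-)$ on a neighborhood of $\bar u$. The strict entropy inequality $\sum_\ell \norm{\nu_\ell} F(\bar u_\ell) < 0$ assumed in \Cref{th:main} guarantees, by continuity, that the selection condition $\sum_\ell \norm{\nu_\ell} F \le 0$ is non-binding for small waves, so it does not enter the linearization.

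The second step is the elementary conclusion. Choosing $\delta_J > 0$ small enough that the neighborhood above contains all states $\mathcal L_1(\sigma_\ell^-)(\bar u_\ell)$ with $\abs{\sigma_\ell^-} < \delta_J$, the mean value inequality applied to the $C^1$ map $G$ with $G(0) = 0$ gives a constant $C > 0$ with $\abs{\sigma_\ell^+} = \abs{G_\ell(\sigma^-) - G_\ell(0)} \le C \sum_{m=1}^N \abs{\sigma_m^-}$ for each $\ell$; summing over $\ell$ and setting $K_J := \max\{1, N C\}$ yields $\sum_{\ell=1}^N \abs{\sigma_\ell^+} \le K_J \sum_{\ell=1}^N \abs{\sigma_\ell^-}$, with $K_J \ge 1$ as required.

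The main obstacle is the invertibility of $\partial_{\sigma^+}\Phi$ at the equilibrium, i.e. the nondegeneracy of the junction solver: one must verify that the dynamic pressure and the momentum vary monotonically along the outgoing $2$-Lax curves, which is precisely where the subsonic hypothesis $\bar u \in (\mathring A_0)^N$ (ensuring $\lambda_2 > 0$) is indispensable. Everything else is bookkeeping of Lipschitz constants and a choice of $\delta_J$ small enough to stay within the domain of the solver and to keep the entropy inequality strict.
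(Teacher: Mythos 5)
Your argument is correct, but note that the paper does not actually prove this lemma: it simply defers to \cite[Proposition~4.2]{ColomboGaravello1}. What you have written is, in effect, a self-contained reconstruction of the standard argument behind that reference. The structure is sound: in $\mathring A_0$ one has $\lambda_1<0<\lambda_2$, so only $1$-waves reach $x=0$ and only $2$-waves can be emitted; the post-interaction data are $u_\ell^-=\mathcal L_1(\sigma_\ell^-)(\bar u_\ell)$; and the composite map $G\colon\sigma^-\mapsto\sigma^+$ is $C^1$ with $G(0)=0$, so the estimate is just its Lipschitz bound. Your verification of the nondegeneracy of the linearized junction conditions is the genuinely substantive step, and the computation $\nabla P(\bar u_\ell)\cdot r_2(\bar u_\ell)=\bar\rho_\ell\,\lambda_2(\bar u_\ell)^2>0$ is correct (expand $\nabla P=(-q^2/\rho^2+p'(\rho),\,2q/\rho)$ against $r_2=(\rho,\,q+\rho c)$ and complete the square); together with $(r_2)_q=\bar q_\ell+\bar\rho_\ell c(\bar\rho_\ell)>0$ and your one-line kernel argument this gives invertibility, and the strict inequality $\sum_\ell\norm{\nu_\ell}F(\bar u_\ell)<0$ correctly disposes of the entropy selection condition for small data. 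Your proof buys transparency: it makes explicit exactly where subsonicity enters (the Jacobian degenerates as $\lambda_2\to0$), whereas the paper hides this inside a citation.

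One caveat worth flagging, though it is a defect of the lemma's statement as much as of your proof: as written, $\delta_J$ and $K_J$ are quantified \emph{before} $\bar u$, i.e.\ they should be uniform over all equilibria in $(\mathring A_0)^N$. Your constants (the IFT neighborhood and the Lipschitz constant of $G$) manifestly depend on $\bar u$, and indeed no uniform constants can exist, since $\bar\rho_\ell\lambda_2(\bar u_\ell)^2\to0$ as $\bar u_\ell$ approaches the sonic boundary and the linearization degenerates. In the paper the lemma is only ever applied to the single fixed equilibrium of \Cref{th:main}, so this is harmless in context, but you should state explicitly that your $\delta_J$ and $K_J$ depend on the fixed $\bar u$ (or on a compact subset of $(\mathring A_0)^N$ containing it).
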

For the proof, see~\cite[Proposition~4.2]{ColomboGaravello1}.

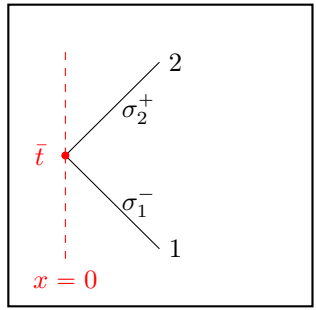
\begin{figure}[h]
	\begin{tikzpicture}
		\begin{scope}
		
		\draw (.75,2) -- node [right] {$\sigma_2^+$} +(45:1.75) node [right] {2};
		\draw (.75,2) -- node [right] {$\sigma_1^-$} +(-45:1.75) node [right] {1};
		\draw [dashed, red] (.75,2) -- +(270:1.4) node [red, below] {$x=0$} (.75,2) -- +(90:1.4);
		\fill [red] (.75,2) node [left = .15cm, red] {$\bar{t}$} circle (1.5pt); 
		\draw [thick] (0,0) rectangle (4,4);
	\end{scope}
	\end{tikzpicture}
\caption{Wave-front interactions at the junction $x = 0$ at $t = \bar t$.}
\label{fig:interactions-j}
\end{figure}

\begin{corollary}
\label{cor:decreasing-J-junction}
    Assume that at time $\bar t$ there is an interaction
    between a wave and the junction $J$. Then
    \begin{equation}
        \Delta J_\gamma(\bar t) < 0.
    \end{equation}
\end{corollary}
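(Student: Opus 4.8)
The plan is to mirror the strategy of \Cref{cor:J-dec-pipe}, splitting
$\Delta J_\gamma(\bar t) = \Delta V_\gamma(\bar t) + \kappa\,\Delta Q_\gamma(\bar t)$ and showing that the linear part decreases by a definite amount while the quadratic part can only increase by a higher-order term controlled by the smallness of $V_\gamma$. The key geometric observation is that, since $\bar u_\ell \in \mathring{A}_0$ forces $\lambda_1 < 0 < \lambda_2$ locally, only waves of the first family travel toward the junction; hence a single $1$-wave $\sigma^-$ (say, in the $i$-th pipe) reaches $x = 0$ at time $\bar t$ and is resolved by $\rsj$ into outgoing $2$-waves $\sigma_\ell^+$, with $\ell \in \{1, \dots, N\}$.

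First I would compute the linear part. Since the junction sits at $x = 0$, the incoming $1$-wave carries, according to~\eqref{eq:V_weight}, the weight $2K_J\,\abs{\sigma^-}e^{\gamma \cdot 0} = 2K_J\abs{\sigma^-}$, while each emerging $2$-wave carries weight $\abs{\sigma_\ell^+}e^{-\gamma\cdot 0} = \abs{\sigma_\ell^+}$. Hence
\begin{equation*}
  \Delta V_\gamma(\bar t) = \sum_{\ell=1}^N \abs{\sigma_\ell^+} - 2K_J\abs{\sigma^-}.
\end{equation*}
Applying \Cref{lem:interaction-junction} (with only the $i$-th incoming strength nonzero) gives $\sum_{\ell=1}^N \abs{\sigma_\ell^+} \le K_J\abs{\sigma^-}$, so that $\Delta V_\gamma(\bar t) \le -K_J\abs{\sigma^-}$. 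This is precisely where the choice of the weight $2K_J$ on $1$-waves in~\eqref{eq:V_weight} pays off: it is calibrated so that, even after the junction amplifies the total strength by the factor $K_J$, the linear functional strictly decreases.

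Next I would estimate the quadratic part. The removal of the incoming $1$-wave only deletes approaching pairs, hence decreases $Q_\gamma$; the genuine increase comes from the new $2$-waves at $x = 0$. By \Cref{def:approaching}, each new $2$-wave $\sigma_\ell^+$ placed at the leftmost position $x = 0$ is approaching every $1$-wave lying to its right in the same pipe (contributing to $Q_\gamma^{1,2}$) and every right-lying shock of the second family (contributing to $Q_\gamma^{2,2}$). Bounding the weighted strengths of the pre-existing waves in each pipe by $V_\gamma(\bar t^-)$ through~\eqref{eq:V_weight}, and using that $Q_\gamma$ is summed pipe-by-pipe so that no new cross-pipe pairs arise, one obtains
\begin{equation*}
  \Delta Q_\gamma(\bar t) \le C \Big(\sum_{\ell=1}^N \abs{\sigma_\ell^+}\Big) V_\gamma(\bar t^-) \le C\,K_J\abs{\sigma^-}\,V_\gamma(\bar t^-)
\end{equation*}
for a suitable constant $C$ (depending on $K_J$), again via \Cref{lem:interaction-junction}. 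Combining the two estimates yields
\begin{equation*}
  \Delta J_\gamma(\bar t) \le \abs{\sigma^-}\big(-K_J + \kappa C K_J\, V_\gamma(\bar t^-)\big),
\end{equation*}
which is strictly negative once $V_\gamma$ is small enough, i.e. once the total variation of the approximate solution is below the threshold guaranteed by the construction. The main obstacle, and the point requiring the most care, is the bookkeeping of the newly created approaching pairs in $Q_\gamma^{1,2}$ and $Q_\gamma^{2,2}$: one must verify that a $2$-wave emitted at $x = 0$ genuinely approaches exactly the waves to its right, consistently with \Cref{def:approaching}, so that the weighted sums telescope against $V_\gamma(\bar t^-)$. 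Once this is checked, the smallness of $V_\gamma$ closes the argument exactly as in \Cref{cor:J-dec-pipe}.
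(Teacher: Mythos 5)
Your proposal is correct and follows essentially the same route as the paper's proof: decompose $\Delta J_\gamma(\bar t)$ into its linear and quadratic parts, use \Cref{lem:interaction-junction} to bound $\sum_{\ell}\abs{\sigma_\ell^+}$ by $K_J\abs{\sigma^-}$ so that the weight $2K_J$ makes $\Delta V_\gamma(\bar t)\le -K_J\abs{\sigma^-}$, and absorb the increase of $Q_\gamma$ (of order $\abs{\sigma^-}\,V_\gamma(\bar t^-)$) using the smallness of $V_\gamma$. The extra care you devote to the geometric localization at $x=0$ (where the exponential weights equal $1$) and to the bookkeeping of newly created approaching pairs is consistent with, and slightly more explicit than, the paper's argument.
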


\begin{proof}
    Assume, without loss of generality, that the interaction is 
    due to a wave of the first family of strength $\sigma_1^-$ from the pipe
    $\ell = 1$.
    Denote with $\sigma_\ell^+$ the strength of the emerging waves of the second
    family in the pipe $\ell=1, \ldots, n$.
    By \Cref{lem:interaction-junction}, we obtain that
    \begin{align*}
      \sum_{l=1}^n\left|\sigma_l^{+}\right| \leq K_J 
      \cdot \left| \sigma_1^{-}\right|.
    \end{align*}
    
    Then, since $V_\gamma$ is sufficiently small,
    \begin{align*}
        \Delta J_\gamma(\bar t) 
        &  = \Delta V_\gamma(\bar t) + \kappa \Delta Q_\gamma(\bar t)
        \\
        &
        \le \sum_{\ell=1}^n \abs{\sigma_\ell^+} - 2K_J \abs{\sigma_1^-}
        +\kappa \sum_{l=1}^n \abs{\sigma_\ell^+} V_\gamma(\bar t^-) 
        \\
        & \le K_J \abs{\sigma_1^-} - 2K_J \abs{\sigma_1^-}
        + \kappa K_J \abs{\sigma_1^-} V_\gamma(\bar t^-)
        \\
        & \le - K_J \abs{\sigma_1^-}
        + \kappa K_J \abs{\sigma_1^-} V_\gamma(\bar t^-)
        \\
        & \le - \frac{K_J}{2} \abs{\sigma_1^-} < 0,
    \end{align*}
    concluding the proof.
\end{proof}

Finally, we deal with the interaction of a wave with the external boundary.
\begin{lemma}
  \label{lem:interaction-boundary}
  There exists a constant $C > 0$ with the following property.
  Assume that a wave of the second family
  with strength $\sigma_2^-$ interacts with the external boundary 
  $x=1$ at a time $\bar t$ from the $\ell$-th pipe,
  $\ell \in \left\{1, \ldots, N\right\}$; see \Cref{fig:interactions-b}.
  Then, 
  the emerging wave of the first family
  has a strength $\sigma_1^+$ satisfying the estimate
  \begin{equation}
    \label{eq:estim-1-boundary}
    \abs{\sigma_1^+} \le C k_\ell \abs{\sigma_2^-}.
  \end{equation}
\end{lemma}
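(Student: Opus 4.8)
The plan is to read the estimate directly off the two feedback relations that the boundary trace must satisfy immediately before and after the interaction, and then to Taylor-expand along the Lax curves. At the interaction time $\bar t$, the incoming $2$-wave joins the state $u^{-}$ behind it to the old boundary trace $u_m = \mathcal L_2(\sigma_2^-)(u^-)$, while the reflected $1$-wave joins the same $u^-$ to the new boundary trace $u^{+} = \mathcal L_1(\sigma_1^+)(u^-)$. Since both $u_m$ and $u^+$ satisfy the $\ell$-th feedback relation, one has
\[
v_2(u_m) - k_\ell v_1(u_m) = v_2(\bar u_\ell) - k_\ell v_1(\bar u_\ell) = v_2(u^+) - k_\ell v_1(u^+),
\]
and subtracting yields the single scalar identity $v_2(u_m) - v_2(u^+) = k_\ell\left(v_1(u_m) - v_1(u^+)\right)$, which is what I would expand to solve for $\sigma_1^+$ in terms of $\sigma_2^-$.

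The structural input is that $v_1,v_2$ are Riemann invariants: by \eqref{eq:inviariant-orthogonality}, $\nabla v_1\cdot r_1 = \nabla v_2\cdot r_2 = 0$, so $v_1$ is constant along the first family and $v_2$ along the second, while the transversal derivatives $\nabla v_2\cdot r_1 = \nabla v_1\cdot r_2 = 2\sqrt{p'}$ are nonzero. Expanding along the Lax curves issued from $u^-$, and using that shock and rarefaction curves of one family agree to second order, I get $v_1(u^+) = v_1(u^-) + O(|\sigma_1^+|^3)$ and $v_2(u_m) = v_2(u^-) + O(|\sigma_2^-|^3)$, together with the nondegenerate transversal variations $v_2(u^+) = v_2(u^-) + \tfrac{\nabla v_2\cdot r_1}{\nabla\lambda_1\cdot r_1}\,\sigma_1^+ + O((\sigma_1^+)^2)$ and $v_1(u_m) = v_1(u^-) + \tfrac{\nabla v_1\cdot r_2}{\nabla\lambda_2\cdot r_2}\,\sigma_2^- + O((\sigma_2^-)^2)$. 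Substituting into the scalar identity, the $u^-$ terms cancel and, at leading order,
\[
\frac{\nabla v_2\cdot r_1}{\nabla\lambda_1\cdot r_1}\,\sigma_1^+ = -\,k_\ell\,\frac{\nabla v_1\cdot r_2}{\nabla\lambda_2\cdot r_2}\,\sigma_2^- + (\text{higher order}).
\]
Because $\nabla v_2\cdot r_1 = \nabla v_1\cdot r_2$ and, by \eqref{eq:r-orientation}, $\nabla\lambda_1\cdot r_1 = \nabla\lambda_2\cdot r_2$, the two coefficients coincide and I obtain $\sigma_1^+ = -k_\ell\,\sigma_2^- + (\text{higher order})$. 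This is precisely where the factor $k_\ell$ is produced: it vanishes as $k_\ell\to 0$ because, when $k_\ell=0$, the boundary only fixes $v_2$, and a $2$-wave leaves $v_2$ invariant to leading order — the orthogonality $\nabla v_2\cdot r_2 = 0$ is the crux of the whole estimate.

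The well-posedness of this reflection — existence, uniqueness and $C^1$-dependence of $\sigma_1^+$ on $u^-$ (and on $k_\ell$), vanishing exactly when $u^-$ lies on the boundary manifold $\{v_2 - k_\ell v_1 = v_2(\bar u_\ell) - k_\ell v_1(\bar u_\ell)\}$ — I would obtain by the same implicit-function argument already used to show that $\rsb$ is well defined, the transversality $\partial_\sigma\big(v_2 - k_\ell v_1\big)\big(\mathcal L_1(\sigma)(u^-)\big)\big|_{\sigma=0} = \nabla v_2\cdot r_1/(\nabla\lambda_1\cdot r_1)\neq 0$ being exactly the nondegeneracy condition there. The one delicate point — and the main obstacle — is the remainder: the term $O(|\sigma_2^-|^3)$ coming from the second Riemann invariant along a $2$-shock does \emph{not} carry a factor $k_\ell$, so once the self-referential $O((\sigma_1^+)^2)$ term is absorbed into the left-hand side for wave strengths bounded by $\delta_J$, what the expansion strictly delivers is $|\sigma_1^+| \le (k_\ell + C\delta_J)\,|\sigma_2^-|$. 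Restricting to sufficiently small wave sizes absorbs this contribution into the constant $C$ of \eqref{eq:estim-1-boundary}; moreover it is harmless for the decay estimate at the boundary, where the reflected $1$-wave (weighted by $2K_J e^{\gamma}$) need only be beaten by the incoming $2$-wave (weighted by $e^{-\gamma}$), a gap that survives the $O(\delta_J)$ correction once $\bar k$ and $\delta_J$ are chosen small.
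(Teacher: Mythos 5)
Your argument is essentially the paper's proof: the paper introduces
$\psi(\sigma_1,\sigma_2)=v_2\left(\mathcal L_1(\sigma_1)(u_l)\right)-k_\ell v_1\left(\mathcal L_1(\sigma_1)(u_l)\right)-v_2\left(\mathcal L_2(\sigma_2)(u_l)\right)+k_\ell v_1\left(\mathcal L_2(\sigma_2)(u_l)\right)$, applies the implicit function theorem together with the orthogonality relations \eqref{eq:inviariant-orthogonality} to obtain $\sigma_1=\sigma_1(\sigma_2)$ with $\sigma_1'(0)=-k_\ell\,\frac{\nabla v_1\cdot r_2}{\nabla v_2\cdot r_1}$, and reads off \eqref{eq:estim-1-boundary} --- which is exactly your first-order expansion of the two feedback relations along the Lax curves. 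Your closing caveat is well taken and is the one point where you are more careful than the paper: the higher-order remainder (in particular the $O(|\sigma_2^-|^3)$ variation of $v_2$ across a $2$-shock) carries no factor $k_\ell$, so the implicit function only yields $|\sigma_1^+|\le \left(Ck_\ell+C\delta_J^2\right)|\sigma_2^-|$ rather than \eqref{eq:estim-1-boundary} literally; as you observe, this weaker bound still suffices for \Cref{cor:decreasing-J_boundary} once $\bar k$ and $\delta_J$ are chosen small.
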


\begin{figure}[h]
  \begin{tikzpicture}
    \begin{scope}
      \draw (3.25,2) -- node [left] {$\sigma_1^+$} +(135:1.75) node
      [left] {1};

      \draw (3.25,2) -- node [left] {$\sigma_2^-$}
      +(-135:1.75) node [left] {2};

      \draw [dashed, red] (3.25,2) --
      +(270:1.4) node [red, below] {$x=1$} (3.25,2) -- +(90:1.4);

      \fill [red] (3.25,2) node [right = .15cm, red] {$\bar{t}$}
      circle (1.5pt);

      \draw [thick] (0,0) rectangle (4,4);
    \end{scope}
	
  \end{tikzpicture}
  \caption{Interaction of a wave of the second family at time $\bar t$ with the external boundary
    located at position $x = 1$.  A wave of the first family is generated.}
  \label{fig:interactions-b}
\end{figure}
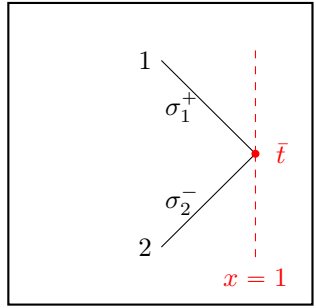

\begin{proof}
  Let us denote by $u_l$ and $u_m$ the states on the left and on the
  right of the interacting wave, respectively, so that
  $u_m = \mathcal L_2\left(\sigma_2^-\right) \left(u_l\right)$.

  Since $u_m$ is an equilibrium at the external boundary, then
  \begin{equation}
    \label{eq:u_m_equilibrium}
    v_1\left(u_m\right) = k_\ell v_2\left(u_m\right)
    - k_\ell v_2\left(\bar u_\ell\right) + v_1\left(\bar u_\ell\right).
  \end{equation}
  The emerging wave $(u_l, u_r)$ satisfies
  $u_r = \mathcal L_1\left(\sigma_1^+\right) \left(u_l\right)$
  and
  \begin{equation}
    \label{eq:u_r_equilibrium}
    v_1\left(u_r\right) = k_\ell v_2\left(u_r\right)
    - k_\ell v_2\left(\bar u_\ell\right) + v_1\left(\bar u_\ell\right).
  \end{equation}

  Consider the function
  \begin{equation*}
    \psi\left(\sigma_1, \sigma_2\right) :=
    v_2\left(\mathcal L_1\left(\sigma_1\right)\left(
        u_l\right)\right)
    - k_\ell v_1 \left(\mathcal L_1\left(\sigma_1\right)
        \left(u_l\right)\right)
    -  v_2\left(\mathcal L_2\left(\sigma_2\right)\left(
        u_l\right)\right)
    + k_\ell v_1 \left(\mathcal L_2\left(\sigma_2\right)
        \left(u_l\right)\right),
  \end{equation*}
  whose zeros represent the possible strengths of the interacting
  and generated waves at the right boundary.
  The functions $v_2$ and $v_1$, defined in~\eqref{eq:Riemann-invariants},
  are of class $C^2$
  thanks to~\eqref{ass:p}. Also the Lax curves have
  the same regularity; see~\cite[Chapter~5]{MR1816648}.
  Therefore, $\psi$ is of class $C^2$. Clearly,
  \begin{equation*}
    \psi\left(0, 0\right) = v_2\left(u_l\right) - k_\ell v_1\left(u_l\right) - v_2\left(u_l\right)
    + k_\ell v_1\left(u_\ell\right) = 0.
  \end{equation*}
  Moreover,
  \begin{align*}
    \partial_{\sigma_1} \psi\left(\sigma_1, \sigma_2\right)
    & =
    \nabla v_2\left(\mathcal L_1\left(\sigma_1\right)
        \left(u_l\right)\right) \cdot
    \mathcal L_1'\left(\sigma_1\right)\left(u_l\right)
          - k_\ell \nabla v_1 \left(\mathcal L_1\left(\sigma_1\right)
      \left(u_l\right)\right)
      \cdot
      \mathcal L_1'\left(\sigma_1\right)\left(u_l\right),
      \\
    \partial_{\sigma_2} \psi\left(\sigma_1, \sigma_2\right)
    & =
    -\nabla v_2\left(\mathcal L_2\left(\sigma_2\right)
        \left(u_l\right)\right) \cdot
    \mathcal L_2'\left(\sigma_2\right)\left(u_l\right)
      + k_\ell \nabla v_1 \left(\mathcal L_2\left(\sigma_2\right)
      \left(u_l\right)\right)
      \cdot
      \mathcal L_2'\left(\sigma_2\right)\left(u_l\right),
  \end{align*}
  and so, by~\eqref{eq:inviariant-orthogonality},
  \begin{align*}
    \partial_{\sigma_1} \psi\left(0, 0\right)
    & =
    \nabla v_2\left(u_l\right) \cdot r_1 \left(u_l\right)
    - k_\ell \nabla v_1 \left(u_l\right)
    \cdot
    r_1 \left(u_l\right)
    = \nabla v_2\left(u_l\right)
    \cdot
    r_1 \left(u_l\right) > 0,
    \\
    \partial_{\sigma_2} \psi\left(0, 0\right)
    & =
    - \nabla v_2\left(u_l\right) \cdot r_2 \left(u_l\right)
    + k_\ell \nabla v_1 \left(u_l\right)
    \cdot
    r_2 \left(u_l\right)
    = k_\ell \nabla v_1\left(u_l\right)
    \cdot
    r_2 \left(u_l\right) > 0.
  \end{align*}
  Since $\partial_{\sigma_1} \psi (0,0) \ne 0$, by the implicit function theorem, there exists a
  $C^2$ function $\sigma_1 = \sigma_1(\sigma_2)$ with bounded derivative
  such that $\sigma_1(0) = 0$ and $\psi(\sigma_1(\sigma_2), \sigma_2) = 0$
  for every $\sigma_2$ in a suitable neighborhood of $0$.
  Moreover
  \begin{equation*}
      \sigma_1'(0) = - \frac{k_\ell \nabla v_1\left(u_l\right)
    \cdot
    r_2 \left(u_l\right)}{\nabla v_2\left(u_l\right)
    \cdot
    r_1 \left(u_l\right)} < 0,
  \end{equation*}
  which implies the existence of a constant $C>0$, depending
  only on $\bar u$ and $\delta$, such that
  \begin{equation*}
      \abs{\sigma_1(\sigma_2)} \le C k_\ell \abs{\sigma_2}
  \end{equation*}
  for every $\sigma_2$ in a suitable neighborhood of $0$.
  This permits to prove~\eqref{eq:estim-1-boundary}.
\end{proof}

\begin{remark}
    Note that in the previous proof we deduce that the implicit
    function $\sigma_2 \mapsto \sigma_1(\sigma_2)$
    is strictly decreasing since its derivative is strictly 
    negative. By the choice of the parametrization of the Lax curves,
    see~\eqref{eq:r-orientation},
    we deduce that if the interacting wave is a shock (resp.,
    a rarefaction), then the emerging wave is a rarefaction
    (resp., a shock).
\end{remark}

\begin{corollary}
\label{cor:decreasing-J_boundary}
    Let $K_J$ be as in \Cref{lem:interaction-junction} and $C > 0$ as in \Cref{lem:interaction-boundary}.
    Fix $k_\ell \le \frac{e^{-2\gamma}}{4 C K_J}$ for every $\ell = 1, \ldots, n$.
    Assume that at time $\bar t$ there is an interaction
    between a wave and the external boundary. Then
    \begin{equation}
        \Delta J_\gamma(\bar t) < 0.
    \end{equation}
\end{corollary}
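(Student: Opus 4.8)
The plan is to mirror the template already used in \Cref{cor:J-dec-pipe} and \Cref{cor:decreasing-J-junction}: write $\Delta J_\gamma(\bar t) = \Delta V_\gamma(\bar t) + \kappa\, \Delta Q_\gamma(\bar t)$, show that the linear part $\Delta V_\gamma$ carries a strictly negative reserve, and then absorb the quadratic part $\kappa\,\Delta Q_\gamma$ using the smallness of $V_\gamma$. The input is \Cref{lem:interaction-boundary}: a second-family wave of strength $\sigma_2^-$ reaching $x=1$ from the $\ell$-th pipe is reflected into a single first-family wave of strength $\sigma_1^+$ with $\abs{\sigma_1^+} \le C k_\ell \abs{\sigma_2^-}$, and no other wave is produced.

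First I would compute the linear part. Since the interaction takes place at $x=1$, before the interaction the incoming second-family wave contributes $\abs{\sigma_2^-} e^{-\gamma}$ to $V_\gamma$, while after the interaction the outgoing first-family wave contributes $2K_J \abs{\sigma_1^+} e^{\gamma}$; all other waves are untouched. Hence
\begin{equation*}
  \Delta V_\gamma(\bar t) = 2 K_J \abs{\sigma_1^+} e^{\gamma} - \abs{\sigma_2^-} e^{-\gamma}.
\end{equation*}
Inserting $\abs{\sigma_1^+} \le C k_\ell \abs{\sigma_2^-}$ and using the standing hypothesis $k_\ell \le \frac{e^{-2\gamma}}{4 C K_J}$, which gives $2 K_J C k_\ell e^{\gamma} \le \tfrac{1}{2} e^{-\gamma}$, I obtain the strictly negative estimate
\begin{equation*}
  \Delta V_\gamma(\bar t) \le \left(\tfrac{1}{2} e^{-\gamma} - e^{-\gamma}\right) \abs{\sigma_2^-} = -\tfrac{1}{2} e^{-\gamma} \abs{\sigma_2^-}.
\end{equation*}

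For the quadratic part, the key observation is that the interaction only deletes $\sigma_2^-$ and inserts $\sigma_1^+$ at $x=1$, leaving every other wave (and its position and weight) unchanged. Deleting $\sigma_2^-$ can only lower $Q_\gamma$, so it may be discarded. The only increase comes from the new approaching pairs formed by $\sigma_1^+$: with first-family waves (contributing to $Q_\gamma^{1,1}$) and with all second-family waves sitting at $x_\beta < 1$ (contributing to $Q_\gamma^{1,2}$). Each such product is $\abs{\sigma_1^+} e^{\gamma}$ times the exponentially weighted strength of a partner wave, and since the sums $\sum \abs{\sigma_{1,\alpha}} e^{\gamma x_\alpha}$ and $\sum \abs{\sigma_{2,\beta}} e^{-\gamma x_\beta}$ are each controlled by $V_\gamma(\bar t^-)$, there is a constant $C'$ with $\Delta Q_\gamma(\bar t) \le C' \abs{\sigma_1^+} e^{\gamma}\, V_\gamma(\bar t^-)$. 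Combining the two parts, again using $\abs{\sigma_1^+} \le C k_\ell \abs{\sigma_2^-}$ and $2 K_J C k_\ell e^{\gamma} \le \tfrac{1}{2} e^{-\gamma}$, I would reach
\begin{equation*}
  \Delta J_\gamma(\bar t) \le -\tfrac{1}{2} e^{-\gamma} \abs{\sigma_2^-} + \kappa\, C' C k_\ell e^{\gamma} \abs{\sigma_2^-}\, V_\gamma(\bar t^-);
\end{equation*}
since $V_\gamma(\bar t^-)$ is sufficiently small, exactly as in the preceding corollaries, the second term is dominated by the negative reserve and $\Delta J_\gamma(\bar t) < 0$ follows.

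The step I expect to be most delicate is the bookkeeping of the quadratic part: one must verify, using \Cref{def:approaching} at the rightmost position $x=1$, that removing the incoming wave $\sigma_2^-$ genuinely cannot raise $Q_\gamma$, and that every product newly created by the reflected wave $\sigma_1^+$ is uniformly bounded through the equivalence between $V_\gamma$ and the total variation. Once this smallness mechanism is arranged as in \Cref{cor:J-dec-pipe}, the linear reserve $-\tfrac{1}{2} e^{-\gamma}\abs{\sigma_2^-}$ produced by the feedback bound on $k_\ell$ guarantees the strict decrease.
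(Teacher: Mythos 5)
Your proposal is correct and follows essentially the same route as the paper: the same decomposition $\Delta J_\gamma = \Delta V_\gamma + \kappa\,\Delta Q_\gamma$, the same use of \Cref{lem:interaction-boundary} together with the bound $k_\ell \le \frac{e^{-2\gamma}}{4CK_J}$ to extract the negative reserve $-\tfrac{1}{2}e^{-\gamma}\abs{\sigma_2^-}$ from the linear part, and the same absorption of the quadratic increment via the smallness of $V_\gamma(\bar t^-)$. The only difference is that you track the new approaching pairs in $Q_\gamma$ a bit more explicitly (introducing the constant $C'$), which the paper leaves implicit.
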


\begin{proof}
    Assume, without loss of generality, that the interaction is 
    due to a wave of the second family of strength $\sigma_2^-$ from the $\ell$-th pipe.
    Denote with $\sigma_1^+$ the strength of the emerging waves of the first
    family in that pipe.
    By \Cref{lem:interaction-boundary}, we obtain that
    \begin{align*}
      \left|\sigma_1^{+}\right| \leq C k_\ell
      \left| \sigma_2^{-}\right|.
    \end{align*}
    
    Then, since $V_\gamma$ is sufficiently small,
    \begin{align*}
        \Delta J_\gamma(\bar t) 
        &  = \Delta V_\gamma(\bar t) + \kappa \Delta Q_\gamma(\bar t)
        \\
        &
        \le 2 K_J \abs{\sigma_1^+} e^\gamma - \abs{\sigma_2^-} e^{-\gamma}
        +\kappa \abs{\sigma_1^+} e^\gamma V_\gamma (\bar t^-)
        \\
        & \le 2 C K_J k_\ell e^\gamma \abs{\sigma_2^-} 
        - \abs{\sigma_2^-} e^{-\gamma}
        + \kappa \abs{\sigma_1^+} e^\gamma V_\gamma(\bar t^-)
        \\
        & \le (2C K_J k_\ell e^\gamma - e^{-\gamma}) \abs{\sigma_2^-}
        + C \kappa k_\ell \abs{\sigma_2^-} e^\gamma V_\gamma(\bar t^-)
        \\
        & \le - \frac{e^{-\gamma}}{2} \abs{\sigma_2^-}
        + C \kappa k_\ell \abs{\sigma_2^-} e^\gamma V_\gamma(\bar t^-)
        \\
        & \le - \frac{e^{-\gamma}}{4} \abs{\sigma_2^-} < 0,
    \end{align*}
    concluding the proof.
\end{proof}

\subsection{Existence of an approximate wave-front tracking solution}
\label{ssec:front}

In this subsection, we deal with the existence of a wave-front tracking approximate
solution, in the sense of \Cref{def:epswf}.
In the wave-front tracking approximate solution, we do not consider the so-.called \textsl{non-physical waves}, which is used 
to control the total number of waves and interactions in the case of systems (see~\cite[Chapter~7]{MR1816648}).
Indeed for a $2\times 2$ system with suitable assumptions, it is possible to avoid such technicality (see~\cite[Lemma~2.3]{MR1820292} and~\cite{zbMATH06283685}).

\begin{lemma}
    \label{le:prop-Debora-Graziano}
    Fix $\eps > 0$ and consider a wave-front tracking approximate solution $u_\eps$
    as in~\Cref{ssec:wft-approx}.

    Then, inside every pipe and except for a finite number of interactions,
    there is at most one outgoing wave of
    each family for every interaction.
\end{lemma}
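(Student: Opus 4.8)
The plan is to reduce the statement to a single fact: an interaction inside a pipe can generate more than one outgoing front of a given family only if it creates a \emph{new} rarefaction of strength larger than $\eps$, which must then be discretized into a fan of at least two fronts, and such interactions occur only finitely often. Recall from \Cref{ssec:wft-approx} that a rarefaction fan, once introduced, is never split again, and that individual shock and rarefaction fronts are made to interact pairwise; hence it suffices to examine a single interaction between two incoming fronts. Since the approximate solution has small total variation (bounded by $\delta_0$, with $V_\gamma$ equivalent to $\mathrm{TV}$), the accurate Riemann solver ${\rsc}^{\textrm{acc}}$ of \Cref{sssec:classical-RP} returns exactly one wave of each family, i.e.\ a single shock or a single rarefaction; only a rarefaction of strength exceeding $\eps$ accounts for several outgoing fronts of one family.

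Next I would classify the interaction by the families of the two colliding fronts and apply the interaction estimates of \Cref{lm:inter}. If a $1$-front of strength $\sigma_1^-$ meets a $2$-front of strength $\sigma_2^-$, then \Cref{lm:inter} gives $\abs{\sigma_1^+ - \sigma_1^-} + \abs{\sigma_2^+ - \sigma_2^-} \le K\abs{\sigma_1^- \sigma_2^-}$; as $\abs{\sigma_1^-}, \abs{\sigma_2^-} \le \delta_0$ with $\delta_0 < 1/K$, the outgoing strengths cannot change sign, so each outgoing wave keeps the type of the corresponding incoming wave and no new rarefaction appears. Such an interaction produces exactly one front of each family. If instead two fronts of the same $i$-family collide, then, by \Cref{def:approaching} at least one of them has negative strength, and \Cref{lm:inter} shows that the outgoing $i$-wave is a single front: two shocks merge into one shock, a shock and a rarefaction of the same family cancel into one front, and two rarefactions of the same family never approach. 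Meanwhile the wave of the opposite family has strength at most $K\abs{\sigma_i' \sigma_i''}$, and is therefore the only possible source of a newly created rarefaction fan.

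It then remains to bound the number of such creations. A newly created opposite-family rarefaction has strength at most $K\abs{\sigma_i' \sigma_i''}$, so to consist of two or more fronts it must satisfy $\abs{\sigma_i' \sigma_i''} > \eps/K$. By the estimates in the proof of \Cref{cor:J-dec-pipe}, each same-family interaction strictly decreases the weighted Glimm functional $J_\gamma$ of \eqref{eq:J-exp} by at least $c\abs{\sigma_i' \sigma_i''}$, where $c = c(\gamma, \kappa) > 0$ comes from the quadratic term $Q_\gamma$ and the weights $e^{\pm \gamma x}$, which are bounded on $[0,1]$; hence each such \emph{exceptional} interaction decreases $J_\gamma$ by more than $c\eps/K$. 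Since $J_\gamma \ge 0$ is nonincreasing along the construction and bounded by $J_\gamma(0) < \infty$, at most $J_\gamma(0) K /(c\eps)$ exceptional interactions can occur, while every other in-pipe interaction yields at most one outgoing front per family. This is the claim.

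The main obstacle is the case analysis of the second paragraph: one must check carefully that, for small total variation, interactions preserve wave types across families and that the primary same-family outgoing wave is always a single front, so that the opposite-family wave is the unique candidate for a new rarefaction fan. This bookkeeping of front types is classical for $2\times2$ systems, and I would refer to \cite[Lemma~2.3]{MR1820292} and \cite{zbMATH06283685} for the complete argument.
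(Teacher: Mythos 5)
Your proposal is correct and follows essentially the same route as the paper's proof: identify that the only way an in-pipe interaction can emit more than one front of a family is through a newly created opposite-family rarefaction of strength exceeding $\eps$ arising from a same-family collision, bound that strength by $K\abs{\sigma'\sigma''}$ via \Cref{lm:inter}, and conclude via \Cref{cor:J-dec-pipe} that each such event forces a decrease of $J_\gamma$ by a fixed positive amount, so only finitely many can occur. Your additional sign-preservation check for $1$--$2$ interactions under small total variation is a more explicit justification of the step the paper dispatches with ``by construction, only interactions of waves of the same family can produce a rarefaction fan at positive times,'' but it is the same argument.
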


\begin{proof}
    Inside a pipe, by construction, only the interactions of
    waves of the same family can produce
    a rarefaction fan at positive times.
    More precisely, if two waves of the $k_1$
    family ($k_1 \in \{1, 2\}$) interacts at
    some positive time $\bar t$ inside a pipe,
    then the emerging wave of the $k_2$ family
    ($k_2 \in \{1, 2\}\setminus\{k_1\}$)
    can be a rarefaction of strength bigger than $\eps$ and so it is split in a rarefaction fan.
    Without loss of generality, let us assume that at time $\bar t > 0$
    two waves of the first family of strength $\sigma_1'$ and $\sigma_1''$ interact together producing a rarefaction of the second family of strength $\sigma_2$ with $\abs{\sigma_2} > \eps$. Then, using \Cref{lm:inter} and \Cref{cor:J-dec-pipe}, we deduce that
    \begin{equation*}
        \eps < \abs{\sigma_2} \le K \abs{\sigma_1' \sigma_1''} \le \frac{K}{\left( 2 K_J K (e^{\gamma \bar x}+ e^{-\gamma \bar x}) - \frac{\kappa}{2}e^{2\gamma \bar x}\right)} \Delta J_\gamma(\bar t).
    \end{equation*}
    This implies that
    \begin{equation*}
        \Delta J_\gamma(\bar t) \le \frac{\eps \left( 2 K_J K (e^{\gamma \bar x}+ e^{-\gamma \bar x}) - \frac{\kappa}{2}e^{2\gamma \bar x}\right)}{K
        } < 0.
    \end{equation*}
    Since the functional $J_\gamma$ is non increasing, then we deduce that such interactions can happen at most a finite
    number of times.
\end{proof}

\begin{proposition}
  \label{prop:existence-wft}
  For every $\eps > 0$,
  the construction illustrated in~\Cref{ssec:wft-approx} produces a wave-front
  tracking approximate solution, defined for every time $t \ge 0$.
\end{proposition}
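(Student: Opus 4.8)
The plan is to analyze the inductive construction of \Cref{ssec:wft-approx} and to show that it never breaks down in finite time, i.e.\ that the interaction times do not accumulate. Fix $\eps > 0$. At $t = 0$ we have, in each pipe, piecewise constant data with finitely many jumps, and we solve the corresponding Riemann problems at the interior jumps, at the junction, and at the external boundary; the approximate solution is then defined up to the first interaction time, at which the data are again piecewise constant with finitely many jumps, so the procedure can be iterated. Two things must be checked: first, that at every such step the three Riemann solvers of \Cref{ssec:wft-approx} are actually well defined on the states that occur; second, that only finitely many interactions take place in any bounded time interval, so that the iteration reaches every $t \ge 0$.

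For the first point, I would invoke the monotonicity of the Glimm-type functional $J_\gamma$. By \Cref{cor:J-dec-pipe}, \Cref{cor:decreasing-J-junction}, and \Cref{cor:decreasing-J_boundary}, the quantity $J_\gamma$ is constant between interactions and strictly decreases at every interior, junction, and boundary interaction; hence $J_\gamma(t) \le J_\gamma(0)$ for all $t$. Since $V_\gamma$ --- and therefore $J_\gamma$ --- is equivalent to the total variation of the approximate solution, the total variation stays below the threshold fixed in \Cref{def:epswf}, so all the traces remain in the small neighborhood of the equilibrium states on which the interior solver of \Cref{sssec:classical-RP}, the junction solver $\rsj$, and the boundary solver $\rsb$ are well defined. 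This guarantees that each step of the construction can be performed.

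The core of the argument is the second point, for which I would first bound the number of wave-fronts and then deduce local finiteness of the interaction times. Inside the pipes, the only mechanism that increases the number of fronts is the creation of a new rarefaction fan, which by construction is never subdivided further; by \Cref{le:prop-Debora-Graziano}, such creations occur only finitely many times. All other interior interactions are covered by \Cref{lm:inter}: interactions of waves of different families are crossings that preserve the number of fronts, while same-family interactions that do not generate a new fan do not increase it either. At the junction and at the external boundary, each interaction produces a controlled number of outgoing fronts (one $2$-wave per pipe at the junction by \Cref{lem:interaction-junction}, one $1$-wave at the boundary by \Cref{lem:interaction-boundary}), and the strict, in fact linear, decrease of $J_\gamma$ recorded in \Cref{cor:decreasing-J-junction} and \Cref{cor:decreasing-J_boundary} prevents these from accumulating. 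Combining these facts with the standard $2 \times 2$ front-tracking theory, which allows one to dispense with non-physical fronts (see~\cite{MR1820292} and~\cite[Chapter~7]{MR1816648}), yields a uniform bound on the number of fronts present at any time.

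Once the number of fronts is bounded, I would conclude by a now-classical no-accumulation argument: each front travels on a straight segment with speed bounded by $\max_k |\lambda_k|$, any pair of fronts meets at most once, and new fronts are generated only finitely often; hence only finitely many interaction points lie in $[0,T] \times \overline{I}$ for every $T > 0$. Therefore the inductive construction reaches every time $T$ after finitely many steps and the $\eps$-approximate wave-front tracking solution is defined for all $t \ge 0$. The main obstacle I anticipate is precisely this last finiteness statement: the junction and the external boundary set up a feedback loop in which $2$-waves reflect into $1$-waves and conversely, so a purely local interaction estimate is not enough, and one genuinely needs the strictly decreasing, exponentially weighted functional $J_\gamma$ --- whose weights $e^{\pm \gamma x}$ and whose linear dissipation at $x = 0$ and $x = 1$ are tailored to this loop --- to rule out a Zeno-type accumulation of interactions at a finite time.
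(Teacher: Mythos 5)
Your overall goal (rule out accumulation of interaction times) is the right one, and you correctly isolate \Cref{le:prop-Debora-Graziano} and the junction--boundary feedback loop as the crux, but the step on which everything rests is not justified. You claim that the strict decrease of $J_\gamma$ recorded in \Cref{cor:decreasing-J-junction} and \Cref{cor:decreasing-J_boundary} ``prevents'' junction and boundary interactions ``from accumulating''. It does not: the decrease there is proportional to the strength $\abs{\sigma^-}$ of the incoming front, which has no positive lower bound, so infinitely many such interactions are compatible with $J_\gamma$ staying bounded below. (Contrast this with \Cref{le:prop-Debora-Graziano}, where the outgoing rarefaction must exceed $\eps$; that is exactly what turns strict decrease into a quantified decrease and hence into finiteness.) Likewise, the concluding count ``bounded number of fronts, each pair meets at most once, new fronts generated only finitely often'' is circular at the junction (one incoming $1$-front produces $N$ outgoing $2$-fronts, so the number of fronts grows with the number of junction interactions, which is what you are trying to bound) and ignores that every same-family interior interaction spawns a new wave line of the other family, so the naive ``meets at most once'' bookkeeping does not close.

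The paper fills precisely these holes by different means. For an alleged interior accumulation point it reproduces the argument of \cite[Lemma~2.5]{MR1820292}: after discarding the finitely many interactions with two same-family outgoing fronts (\Cref{le:prop-Debora-Graziano}) and a finite exceptional set controlled by a wave-counting functional $\mathcal V$, one builds from an interaction point two polygonal lines, one in each characteristic family, which by strict hyperbolicity separate at a definite rate and therefore cannot both feed back into a single accumulation point. For accumulation at $x=0$ or $x=1$ the decisive ingredient is finite propagation speed, not $J_\gamma$: on a time interval of length less than $1/\Lambda_{\max}$ a $2$-wave emitted by the junction cannot reach $x=1$, so the only $2$-fronts that can hit the boundary are the finitely many present initially plus the finitely many created by same-family interior interactions; hence the boundary (and symmetrically the junction) fires only finitely often per such interval, and one iterates over intervals of that length. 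Your proposal would need to replace the appeal to the strict decrease of $J_\gamma$ by this propagation-time argument in order to be correct.
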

\begin{proof}
    Fix $\eps > 0$ and consider a wave-front tracking approximate solution $u_\eps$.
    We need to prove that the total number of 
    interactions (and consequently of waves) remain finite.

    First, assume, by contradiction, that there
    exists an infinite number of
    interactions.
    Define $T$ such that, for $t < T$ there are a finite number of waves and interactions, but at time $t = T$ there are infinitely many, accumulating
    at a point $(T, \bar x)$.

    \textbf{Case 1: $\bar x \in (0, 1)$.} This case is completely
    similar to the one of~\cite[Lemma~2.5]{MR1820292}.
    We report here for completeness.
    
    Fix $\alpha > 0$ such that
    $\bar x - 2 \alpha > 0$ and
    $\bar x + 2 \alpha < 1$. Fix $\Delta t > 0$ such that
    $\Delta t < \frac{\alpha}{\Lambda_{\max}} $. Consider
    the rectangle
    \begin{equation*}
        R = [T - \Delta t , T] \times [\bar x - \alpha, \bar x + \alpha].
    \end{equation*}
    There exists a sequence $(t_i, x_i)$ of interaction points belonging to $R$ such that
    \begin{equation*}
        (t_i, x_i) \to (T, \bar x)
    \end{equation*}
    and $t_1 < t_2 < \cdots.$
    Define $\mathcal{I}$ as the set
    of all the points $(t_i, x_i)$.
    
    Define $\mathcal F$ the sets of all the waves which can be joined, forward in time, to some points on $\mathcal{I}$ and which intersects the set $R$.
    We split $\mathcal{F} = \mathcal{F}_1 \cup \mathcal{F}_2$, where $\mathcal{F}_1$, resp. $\mathcal{F}_2$, is the subset of $\mathcal{F}$ of waves of the first family, resp. of the second family.

    Consider the following sets.
    \begin{enumerate}
        \item $\mathcal{I}_1$:
        the set of all interaction points of $R$ in which there are exatcly two outgoing waves, one of the first family and one of the second family.

        \item $\mathcal{I}_2$:
        the set of all interaction points of $R$ in which the
        two interacting waves belongs to $\mathcal{F}$, but
        there is at most one ougoing waves belonging to $\mathcal{F}$.
        
        \item $\mathcal{I}_3$:
        the set of all interaction points of $R$ in which the
        two interacting waves do not belong to $\mathcal{F}$.

        \item $\mathcal{I}_4$:
        the set of all interaction points of $R$ in which the
        two interacting waves  belong to $\mathcal{F}$ and
        there are at least two outgoing waves of the same family belonging to $\mathcal{F}$.        
    \end{enumerate}

    By \Cref{le:prop-Debora-Graziano},
    the set $\mathcal I_4$ is finite.

    Define, for $t \in [T - \Delta t, T]$, the functional $\mathcal{V}(t)$ as the number of waves, that at time $t$ belongs to $\mathcal F$.
    Note that, for interactions in $\mathcal I_1$ and $\mathcal I_3$ the functional $\mathcal V$ does not change, while $\mathcal V$ strictly decreases by $1$ or $2$ for interactions in $\mathcal I_2$. Finally, $\mathcal{V}$ can increase for interactions in $\mathcal I_4$.

    Moreover, $\mathcal{V}(T - \Delta t)$ is finite, $\mathcal{V}(t) \ge 0$ for all $t$, therefore,
    since $\mathcal{I}_4$ is finite,
    also $\mathcal{I}_2$ is finite.

    Note that all the points in $\mathcal{I}_3$ do not belong to
    $\mathcal{I}$.

Starting from $\left(t_1, x_1\right)$,  we go forward in time with two continuous lines: the first one made of waves of $\mathcal{F}_1$ (first family) and the second one using waves of $\mathcal{F}_2$ (second family). When we reach an interaction point $(\tilde{t}, \tilde{x})$ belonging to $\mathcal{I}_2$ or $\mathcal{I}_4$, we stop and start over from a point $\left(t_j, x_j\right)$, with $t_j>\tilde{t}$.

Since $\mathcal{I}_2$ and $\mathcal{I}_4$ are finite sets, there exists a point $\left(t^*, x^*\right) \in \mathcal{I}$ from which we can construct two lines $\gamma_1(t), \gamma_2(t)$ until the time $T$. 

The bounds on the velocities imply
$$
\gamma_1(t) \leqslant x^*-c\left(t-t^*\right), \quad \gamma_2(t) \geqslant x^*+c\left(t-t^*\right) .
$$

Choose $\bar T \in (t^*, T)$
such that
\begin{equation}
    (T - \bar T) \Lambda_{\max} < c (\bar T - t^*)
\end{equation}
and fix $\bar n$ such that
$t_{\bar n} > \bar T$.

Since $\gamma_1$ is composed of segments of $\mathcal{F}$, the point $\left(t_{\bar n}, \gamma_1\left(t_{\bar n}\right)\right)$ can be joined to some point $(t_h, x_h)$ of $\mathcal{I}$. Moreover, the bounds on the velocities imply that 
$$
x_h \leqslant \gamma_1\left(t_{\bar n}\right)+ \Lambda_{\max}\left(t_h-t_{\bar n}\right), \quad h \geqslant \bar n .
$$

Analogously, 
since $\gamma_2$ is composed of segments of $\mathcal{F}$, the point $\left(t_{\bar n}, \gamma_2\left(t_{\bar n}\right)\right)$ can be joined to some point $(t_k, x_k)$ of $\mathcal{I}$. 
The bounds on the velocities imply that 
$$
x_k \geqslant \gamma_2\left(t_{\bar n}\right) - \Lambda_{\max}\left(t_h-t_{\bar n}\right), \quad k \geqslant \bar n .
$$

Putting these estimates together, we conclude 
\begin{align*}
x_k-x_h & \geqslant \gamma_2\left(t_{\bar n}\right)-\Lambda_{\max}\left(t_k-t_{\bar n}\right)-[\gamma_1\left(t_{\bar n}\right) + \Lambda_{\max}\left(t_h-t_{\bar n}\right)] \\
& \ge 2c (t_n - t^*) -\Lambda_{\max}(t_k - t_{\bar n} + t_h - t_{\bar n})\\
& \ge 2c (\bar T - t^*) - 2 \Lambda_{\max} (T - \bar T) \\
& > 0.
\end{align*}
This contradicts the fact that 
the sequence $x_i$ tends to $\bar{x}$.

\textbf{Case 2: $\bar x = 1$.} First, we suppose $T < \frac{1}{\Lambda_{\max}}$. Once we obtain the result under this restriction, we can reproduce the same argument in the time-intervals $[T, T +  \frac{1}{\Lambda_{\max}}]$, $[T +  \frac{1}{\Lambda_{\max}}, T +  \frac{2}{\Lambda_{\max}} ]$, and so on. 

First, note that the number of interactions at $x=1$ is finite. The assumption
$T < \frac{1}{\Lambda_{\max}}$
implies that a wave of the second family generated at the junction
does not reach the boundary $x=1$ within time $T$.
Moreover, at positive times, new
waves of the second family can be
generated only when two waves
of the first family interact together, but this, by~\Cref{le:prop-Debora-Graziano}, can happen at most a finite number of times. 

    Therefore, there exists a sequence $(t_i, x_i)$ of interaction points such that
    \begin{equation*}
        (t_i, x_i) \to (T, 1)
    \end{equation*}
    and $t_1 < t_2 < \cdots$
    and $x_1 < x_2 < \cdots$.
    Define $\mathcal{I}$ as the set
    of all the points $(t_i, x_i)$.
    
    Define $\mathcal F$ the sets of all the waves which can be joined, forward in time, to some points on $\mathcal{I}$.
    We split $\mathcal{F} = \mathcal{F}_1 \cup \mathcal{F}_2$, where $\mathcal{F}_1$, resp. $\mathcal{F}_2$, is the subset of $\mathcal{F}$ of waves of the first family, resp. of the second family.

    Consider the following sets.
    \begin{enumerate}
        \item $\mathcal{I}_1$:
        the set of all interaction points in which there are exactly two outgoing waves, one of the first family and one of the second family.

        \item $\mathcal{I}_2$:
        the set of all interaction points in which the
        two interacting waves belongs to $\mathcal{F}$, but
        there is at most one ougoing waves belonging to $\mathcal{F}$.
        
        \item $\mathcal{I}_3$:
        the set of all interaction points in which the
        two interacting waves do not belong to $\mathcal{F}$.

        \item $\mathcal{I}_4$:
        the set of all interaction points in which the
        two interacting waves  belong to $\mathcal{F}$ and
        there are at least two outgoing waves of the same family belonging to $\mathcal{F}$.        

        \item $\mathcal{I}_5$:
        the set of all interaction points at $x = 1$.        

    \end{enumerate}

    By \Cref{le:prop-Debora-Graziano},
    the set $\mathcal I_4$ is finite.
    Also $\mathcal{I}_5$ is finite
    as already noted.

    Define, for $t \in [0, T]$, the functional $\mathcal{V}(t)$ as the number of waves, that at time $t$ belongs to $\mathcal F$.
    Note that, for interactions in $\mathcal I_1$ and $\mathcal I_3$ the functional $\mathcal V$ does not change, while $\mathcal V$ strictly decreases by $1$ or $2$ for interactions in $\mathcal I_2$. Finally, $\mathcal{V}$ can increase for interactions in $\mathcal I_4$ and $\mathcal I_5$.

    Moreover, $\mathcal{V}(0)$ is finite, $\mathcal{V}(t) \ge 0$ for all $t$, therefore,
    since $\mathcal{I}_4$  and $\mathcal{I}_5$ are finite,
    also $\mathcal{I}_2$ is finite.

    Note that all the points in $\mathcal{I}_3$ do not belong to
    $\mathcal{I}$.

Starting from $\left(t_1, x_1\right)$,  we go forward in time with two continuous lines: the first one made of waves of $\mathcal{F}_1$ (first family) and the second one using waves of $\mathcal{F}_2$ (second family). When we reach an interaction point $(\tilde{t}, \tilde{x})$ belonging to $\mathcal{I}_2$, $\mathcal{I}_4$, or $\mathcal{I}_5$, we stop and start over from a point $\left(t_j, x_j\right)$, with $t_j>\tilde{t}$.

Since $\mathcal{I}_2$, $\mathcal{I}_4$, and $\mathcal{I}_5$ are finite sets, there exists a point $\left(t^*, x^*\right) \in \mathcal{I}$ from which we can construct two lines $\gamma_1(t), \gamma_2(t)$ until the time $T$. 
We conclude now exactly as in \textbf{Case 1}.

\textbf{Case 3: $\bar x = 0$.} We proceed as in Case 2, by noticing that there are at most a finite
number of wave interactions with
the junction at $x=0$, provided
$\Lambda_{\max} T < 1$.
\end{proof}

\subsection{Existence of a solution}
\label{ssec:existence}

Let us consider a $\eps$-wave-front tracking  solution $u_\eps$.
By the interaction estimates, using the functional $J_0$ on
$u_\eps$ (i.e., $J_\gamma$ with $\gamma = 0$), we deduce, owing to
\Cref{cor:J-dec-pipe},
\Cref{cor:decreasing-J-junction}, and \Cref{cor:decreasing-J_boundary}
\begin{equation}
    J_0(t) \le J_0(0^+)
\end{equation}
since $J_0$ varies
only at times of interaction.
Therefore, we have 
\begin{equation}
    \mathrm{TV}(u_\eps(t)) \le J_0(t) \le J_0(0^+).
\end{equation}
Since it is standard to prove that there exists $L > 0$ such that
\begin{equation*}
    \sum_{\ell=1}^N
    \norm{(\rho_{\ell, \eps}(t), q_{\ell, \eps}(t)) - (\rho_{\ell, \eps} (s), q_{\ell, \eps}(s))}_{L^1(I_\ell)}
    \le L \abs{t-s}
\end{equation*}
for every $t, s \ge 0$, then
we conclude,
by Helly's compactness theorem (see \cite[Theorem~2.4]{MR1816648}), that $u_\eps$ converges strongly in $L^1$ to a limit point $u$. Moreover, it is standard to deduce that $u$ satisfies \cref{it:dom} as well as an entropy condition, i.e., \cref{it:entropy}.

\subsection{Existence of a semigroup}
\label{ssec:semigroup}

Let us consider two $\eps$-wave-front tracking solutions $u$ and $\tilde u$, and define the functional
\begin{equation}
    \label{eq:functional-Phi}
    \Phi(u, \tilde u) \coloneqq  \sum_{\ell = 1}^N \sum_{i=1}^2
    \int_{I_\ell} \abs{s_{\ell, i}(x)} W_{\ell, i}(x) \, \d x.
\end{equation}
Here, $s_{\ell, i}(x)$ measures the strengths of the $i$-th shock wave in the $\ell$ pipe at the point $x$; the weights
$W_{\ell, i}(x)$ are defined by
\begin{equation*}
    W_{\ell, i}(x) \coloneqq  1 + \kappa_1 A_{\ell, i}(x) 
    + \kappa_1 \kappa_2 (J_0(u) + J_0(v))
\end{equation*}
for suitable $\kappa_1 > 0$ and $\kappa_2 > 0$; 
$A_{\ell, i}(x) = A_{\ell, i}^1(x) + A^2_{\ell, i}(x)$, with 
\begin{align}
    \label{eq:A_i}
    A_{\ell, i}^1(x) &= \sum_{\substack{\alpha \in \mathcal J(u) \cup \mathcal J(\tilde u) \\ x_\alpha < x \\ k_\alpha \neq i \\ k_\alpha = 2}}  \abs{\sigma_{\ell, k_\alpha, \alpha}}
    + \sum_{\substack{\alpha \in \mathcal J(u) \cup \mathcal J(\tilde u) \\ x_\alpha > x \\ k_\alpha \neq i \\ k_\alpha = 1}}  \abs{\sigma_{\ell, k_\alpha, \alpha}}, \\ 
    \label{eq:A_i^2}
    A_{\ell, i}^2(x) &= \left\{
    \begin{array}{ll}
         \displaystyle \sum_{\substack{\alpha \in \mathcal J(u) \\ x_\alpha < x \\ k_\alpha = i}} \abs{\sigma_{\ell, k_\alpha, \alpha}} + \sum_{\substack{\alpha \in \mathcal J(\tilde u) \\ x_\alpha > x \\ k_\alpha = i}} \abs{\sigma_{\ell, k_\alpha, \alpha}},
         &  \textrm{ if } s_{\ell, i}(x) < 0,
         \vspace{0.2cm}\\
         \displaystyle \sum_{\substack{\alpha \in \mathcal J(\tilde u) \\ x_\alpha < x \\ k_\alpha = i}} \abs{\sigma_{\ell, k_\alpha, \alpha}} + \sum_{\substack{\alpha \in \mathcal J(u) \\ x_\alpha > x \\ k_\alpha = i}} \abs{\sigma_{\ell, k_\alpha, \alpha}},
         &  \textrm{ if } s_{\ell, i}(x) \ge 0,
    \end{array}\right.
        \end{align}
(see~\cite[Eq.~8.9]{MR1816648} and~\cite{ColomboGaravello1}).
We set $\kappa_1, \kappa_2$ to satisfy $1 \le W_{\ell, i}(x) \le 2$
for every $\ell \in \{1, \ldots, N\}$ and $i=1, 2$. In this way,
the functional $\Phi$ is equivalent to the $L^1$-distance.

Exactly the same calculations as in \cite[Chapter~8]{MR1816648}
imply that, for every $t > 0$ when no interaction happens,
\begin{equation}
    \frac{\d}{\d t} \Phi(u(t), \tilde u(t)) \le C \eps.
\end{equation}
If $t > 0$ is an interaction time for $u$ or $\tilde u$, then
\Cref{cor:J-dec-pipe}, \Cref{cor:decreasing-J-junction}, and 
\Cref{cor:decreasing-J_boundary} imply that
\begin{equation}
    \Delta [J_0(u(t)) + J_0(\tilde u(t))] < 0,
\end{equation}
and so, choosing $\kappa_2$ large enough, we obtain
\begin{equation}
    \Delta \Phi(u(t), \tilde u(t)) < 0.
\end{equation}
Thus, for every $0 \le s \le t$, we obtain that
\begin{equation}
    \Phi(u(t), \tilde u(t)) - \Phi(u(s), \tilde u(s))
    \le C \eps (t-s),
\end{equation}
proving \cref{it:lip}.

In particular, from this Lipschitz dependence result, it is standard to deduce the semigroup property claimed in \cref{it:semi}.

\subsection{Decay of the Glimm-type functional}
\label{ssec:decay-glimm}

In this part we prove that the Glimm-type functional $J_\gamma$, for $\gamma > 0$,
has an exponential-in-time decay, similarly as in~\cite[Lemma 3.2]{MR3567480}.
Let a time $\bar t$ be fixed; we consider the variation of the functional $J_\gamma$ locally around $\bar t$ according to four cases:
\begin{enumerate}
	\item there are no waves' interactions at $t = \bar t$ inside
        the pipes nor an interaction of a wave with the junction or the
        external boundary;
        
	\item there is an interaction between two waves inside a pipe
        at $t = \bar t$;
        
	\item there is a wave hitting the external boundary at time 
 $t = \bar t$;
	
    \item there is a wave hitting the junction at time $t = \bar t$.
\end{enumerate}
In the latter three cases, the estimates in \cref{ssec:interaction-estimates} imply that $\Delta J_\gamma(\bar t) \le 0$ provided
$V_\gamma$ sufficiently small. Hence we focus only on the first case,
where no interaction happens.

\begin{lemma}\label{lm:glimm-decay}
    Assume that at the time $\bar t$ no interactions of any type
    happen. Then the functional $J_\gamma$, defined in \eqref{eq:J-exp},
    is differentiable and it holds
    \begin{equation}
        \label{eq:differential-of-J_gamma}
        \frac{\d}{\d t} J_\gamma(\bar t) \le -c \gamma J_\gamma(\bar t).
    \end{equation}
\end{lemma}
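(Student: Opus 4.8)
The plan is to exploit the fact that, away from interaction times, the approximate solution is piecewise constant with finitely many fronts travelling along straight lines: the wave strengths $\sigma_{\ell,i,\alpha}$ stay constant, while each position evolves affinely as $x_\alpha(t) = x_\alpha(\bar t) + \dot x_\alpha\,(t-\bar t)$ with constant speed $\dot x_\alpha$. Consequently every summand in $V_\gamma$ and $Q_\gamma$ is a smooth function of $t$ on a neighbourhood of $\bar t$: the exponentials $e^{\pm\gamma x_\alpha(t)}$ are smooth, and the combinatorial set of approaching pairs $\mathcal A_\ell$ cannot change, since a switch in the ordering $x_\alpha<x_\beta$ would require a crossing, i.e.\ an interaction. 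This yields differentiability of $J_\gamma$ at $\bar t$ for free and reduces the estimate to differentiating term by term.

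The geometric input I would isolate first is a uniform spectral gap. Because $\bar u_\ell\in\mathring{A}_0$ and the total variation (equivalently $V_\gamma$) is kept small, all states realised by the approximation lie in a fixed neighbourhood of the subsonic equilibria, on which $\lambda_1\le-2c<0<2c\le\lambda_2$ for some $c>0$; Lax's shock inequalities keep admissible shock speeds between the corresponding characteristic speeds, and the $\eps$-accuracy bound $\abs{\dot x_\alpha-\lambda_i}\le\eps$ then yields, for $\eps$ small, the clean dichotomy
\begin{equation*}
  \dot x_\alpha\le -c \quad \text{for every $1$-front,}\qquad \dot x_\alpha\ge c \quad \text{for every $2$-front.}
\end{equation*}

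Next I would differentiate the linear part. Using $\tfrac{\d}{\d t}e^{\gamma x_\alpha}=\gamma\dot x_\alpha e^{\gamma x_\alpha}$, the $1$-fronts (weighted by $e^{\gamma x_\alpha}$, speed $\le -c$) and the $2$-fronts (weighted by $e^{-\gamma x_\alpha}$, speed $\ge c$) both contribute negatively, so that
\begin{equation*}
  \frac{\d}{\d t}V_\gamma(\bar t)\le -c\gamma\,V_\gamma(\bar t).
\end{equation*}
For the quadratic part the same sign structure applies to each product: a pair in $Q_\gamma^{1,1}$ carries the factor $e^{\gamma(x_\alpha+x_\beta)}$ with $\dot x_\alpha+\dot x_\beta\le -2c$; a pair in $Q_\gamma^{2,2}$ carries $e^{-\gamma(x_\alpha+x_\beta)}$ with $-(\dot x_\alpha+\dot x_\beta)\le -2c$; and a mixed pair in $Q_\gamma^{1,2}$ carries $e^{\gamma(x_\alpha-x_\beta)}$ with $\dot x_\alpha-\dot x_\beta\le -2c$. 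Hence every summand decays at rate at least $2c\gamma$, giving $\tfrac{\d}{\d t}Q_\gamma(\bar t)\le -2c\gamma\,Q_\gamma(\bar t)$. Combining the two estimates in $J_\gamma=V_\gamma+\kappa Q_\gamma$ produces
\begin{equation*}
  \frac{\d}{\d t}J_\gamma(\bar t)\le -c\gamma\,V_\gamma(\bar t)-2c\gamma\kappa\,Q_\gamma(\bar t)\le -c\gamma\bigl(V_\gamma(\bar t)+\kappa Q_\gamma(\bar t)\bigr)=-c\gamma\,J_\gamma(\bar t),
\end{equation*}
which is exactly \eqref{eq:differential-of-J_gamma}.

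The main obstacle is really the spectral-gap step: one must ensure the uniform bounds separating the two wave speeds from $0$ hold simultaneously for all fronts of the whole network and are preserved throughout the construction. This is precisely where the hypothesis $(\bar\rho,\bar q)\in(\mathring{A}_0)^N$ enters---it is the analogue, in our two-signed-speed setting, of the ``all speeds positive'' assumption used in~\cite{MR3567480}---and the smallness of $V_\gamma$ (hence of the total variation) is what confines the traces to the neighbourhood where the gap is valid.
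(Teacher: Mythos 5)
Your proposal is correct and follows essentially the same route as the paper: differentiability away from interaction times, term-by-term differentiation of $V_\gamma$ and of the three pieces of $Q_\gamma$, and the sign dichotomy $\dot x_\alpha\le -c$ for $1$-fronts and $\dot x_\alpha\ge c$ for $2$-fronts coming from the subsonic hypothesis. Your explicit justification of that speed gap (via $(\bar\rho,\bar q)\in(\mathring{A}_0)^N$, the Lax inequalities, and the $\eps$-accuracy of front speeds) is a detail the paper leaves implicit.
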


\begin{proof}
    Assume that there is no any interactions at time $t = \bar t$. Since there is a finite number of fronts
    at time $\bar t$, then there exists a neighborhood $\mathcal U$ of $\bar t$ in which no interaction happens. Hence, in $\mathcal U$, $V_\gamma(t)$ and $Q_\gamma$ are smooth functional.
    
    First compute 
    \begin{align*}
        \frac{\d}{\d t}V_\gamma(t) 
        &= \sum_{\ell=1}^n \sum_{\alpha \in \mathcal{J}_\ell}
        \Big(2K_J|\sigma_{\ell, 1, \alpha}|e^{\gamma x_\alpha (t)}\gamma \dot x_\alpha (t) - |\sigma_{\ell, 2, \alpha}|e^{-\gamma x_\alpha(t)}\gamma \dot x_\alpha(t)   \Big)
        \\
        & \le \sum_{\ell=1}^n \sum_{\alpha \in \mathcal{J}_\ell}
        \Big(-2c \gamma K_J|\sigma_{\ell, 1, \alpha}|e^{\gamma x_\alpha (t)}  - c \gamma |\sigma_{\ell, 2, \alpha}|e^{-\gamma x_\alpha(t)}\Big)
        \\
        & = - c \gamma \sum_{\ell=1}^n \sum_{\alpha \in \mathcal{J}_\ell}
        \Big(2K_J|\sigma_{\ell, 1, \alpha}|e^{\gamma x_\alpha (t)} + |\sigma_{\ell, 2, \alpha}|e^{-\gamma x_\alpha(t)} \Big)
        = - c \gamma V_\gamma(t).
    \end{align*}
    Moreover, we have
    \begin{align*}
        \frac{\d}{\d t} Q_\gamma^{1,1}(t)
        & = \sum_{\ell=1}^n \sum_{(\sigma_{\ell, 1, \alpha},
        \sigma_{\ell, 1, \beta}) \in \mathcal{A}_\ell}
        \abs{\sigma_{\ell, 1, \alpha}\sigma_{\ell, 1, \beta}}
        e^{\gamma(x_\alpha(t) + x_\beta(t))}
        \gamma(\dot x_\alpha(t) + \dot x_\beta(t))
        \\
        & \le - c \gamma \sum_{\ell=1}^n \sum_{(\sigma_{\ell, 1, \alpha},
        \sigma_{\ell, 1, \beta}) \in \mathcal{A}_\ell}
        \abs{\sigma_{\ell, 1, \alpha}\sigma_{\ell, 1, \beta}}
        e^{\gamma(x_\alpha(t) + x_\beta(t))}
        = - c \gamma Q_\gamma^{1,1}(t),
        \\
        \frac{\d}{\d t} Q_\gamma^{2,2}(t)
        & = -\sum_{\ell=1}^n \sum_{(\sigma_{\ell, 2, \alpha},
        \sigma_{\ell, 2, \beta}) \in \mathcal{A}_\ell}
        \abs{\sigma_{\ell, 2, \alpha}\sigma_{\ell, 2, \beta}}
        e^{-\gamma(x_\alpha(t) + x_\beta(t))}
        \gamma(\dot x_\alpha(t) + \dot x_\beta(t))
        \\
        & \le - c \gamma \sum_{\ell=1}^n \sum_{(\sigma_{\ell, 2, \alpha},
        \sigma_{\ell, 2, \beta}) \in \mathcal{A}_\ell}
        \abs{\sigma_{\ell, 2, \alpha}\sigma_{\ell, 2, \beta}}
        e^{-\gamma(x_\alpha(t) + x_\beta(t))}
        = - c \gamma Q_\gamma^{2,2}(t),
        \\
        \frac{\d}{\d t} Q_\gamma^{1,2}(t)
        & = \sum_{\ell=1}^n \sum_{(\sigma_{\ell, 1, \alpha},
        \sigma_{\ell, 2, \beta}) \in \mathcal{A}_\ell}
        \abs{\sigma_{\ell, 1, \alpha}\sigma_{\ell, 2, \beta}}
        e^{\gamma(x_\alpha(t) - x_\beta(t))}
        \gamma(\dot x_\alpha(t) - \dot x_\beta(t))
        \\
        & \le - c \gamma \sum_{\ell=1}^n \sum_{(\sigma_{\ell, 1, \alpha},
        \sigma_{\ell, 2, \beta}) \in \mathcal{A}_\ell}
        \abs{\sigma_{\ell, 1, \alpha}\sigma_{\ell, 2, \beta}}
        e^{\gamma(x_\alpha(t) - x_\beta(t))}
        = - c \gamma Q_\gamma^{1,2}(t),
    \end{align*}
    and so
    \begin{equation*}
        \frac{\d}{\d t} Q_\gamma(t) \le -c \gamma Q_\gamma(t).
    \end{equation*}
    Therefore, we deduce that
    \begin{equation*}
        \frac{\d}{\d t} J_\gamma(t) \le -c \gamma J_\gamma(t),
    \end{equation*}
    concluding the proof.       
\end{proof}

\begin{corollary}
    Fix $\gamma > 0$.
    Then the functional $J_\gamma$,
    defined in \eqref{eq:J-exp},
    satisfies the inequality
    \begin{equation}
        J_\gamma(t) \le e^{- c \gamma}
        J_\gamma(0^+).
    \end{equation}
\end{corollary}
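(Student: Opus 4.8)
The plan is to promote the infinitesimal decay estimate of \Cref{lm:glimm-decay} to a global-in-time exponential bound by integrating it over the (finitely many) smoothness intervals and using the monotonicity of $J_\gamma$ across interaction times. First I would fix $t > 0$ and invoke \Cref{prop:existence-wft} to guarantee that the wave-front tracking approximation has only finitely many interaction times in $[0, t]$, say $0 < t_1 < t_2 < \cdots < t_m < t$; I set $t_0 \coloneqq 0$ and $t_{m+1} \coloneqq t$. On each open interval $(t_j, t_{j+1})$ no interaction of any type occurs, so \Cref{lm:glimm-decay} applies: the map $s \mapsto J_\gamma(s)$ is differentiable there and satisfies $\frac{\d}{\d s} J_\gamma(s) \le -c \gamma J_\gamma(s)$.

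Next I would integrate this differential inequality on each subinterval via Gronwall's lemma, obtaining
\[
J_\gamma(t_{j+1}^-) \le e^{-c\gamma(t_{j+1}-t_j)} \, J_\gamma(t_j^+), \qquad j = 0, \ldots, m.
\]
For the behaviour at the interaction times, \Cref{cor:J-dec-pipe}, \Cref{cor:decreasing-J-junction}, and \Cref{cor:decreasing-J_boundary} cover, respectively, interactions inside a pipe, at the junction, and at the external boundary; in each case $\Delta J_\gamma(t_j) < 0$, so in particular $J_\gamma(t_j^+) \le J_\gamma(t_j^-)$ and the functional does not increase across any $t_j$.

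Then I would chain these two facts in a telescoping fashion: starting from $J_\gamma(t) = J_\gamma(t_{m+1}^-)$ and alternately applying the Gronwall factor on each $(t_j, t_{j+1})$ and the non-increase at each $t_j$, the interaction contributions can only help, while the exponential factors multiply to $e^{-c\gamma(t_{m+1}-t_0)} = e^{-c\gamma t}$. This gives the stated exponential-in-time decay
\[
J_\gamma(t) \le e^{-c\gamma t} \, J_\gamma(0^+).
\]

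This argument is essentially routine once \Cref{lm:glimm-decay} and the three monotonicity corollaries are in place; the only point that genuinely requires the earlier analysis is the finiteness of the interaction set on $[0,t]$, supplied by \Cref{prop:existence-wft}, without which the telescoping product would not be well defined. A minor bookkeeping subtlety is the correct handling of the one-sided limits of $J_\gamma$ at the jump times, but this is unambiguous since $J_\gamma$ is piecewise smooth with at most downward jumps, so I expect no real obstacle beyond careful accounting of the finitely many interaction times.
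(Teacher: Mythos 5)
Your argument is correct and is precisely the ``immediate'' proof that the paper omits: integrate the differential inequality of \Cref{lm:glimm-decay} on each of the finitely many interaction-free subintervals guaranteed by \Cref{prop:existence-wft}, and chain across the interaction times using the non-increase of $J_\gamma$ supplied by \Cref{cor:J-dec-pipe}, \Cref{cor:decreasing-J-junction}, and \Cref{cor:decreasing-J_boundary}. Note that what you actually establish is $J_\gamma(t) \le e^{-c\gamma t} J_\gamma(0^+)$, which is what the corollary must intend (and what is needed for \cref{it:exp}); the exponent $e^{-c\gamma}$ as printed is evidently a typo, since without the factor $t$ the inequality fails for small $t>0$ whenever $J_\gamma(0^+)>0$.
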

The proof is immediate and so we omit it. This completes the proof of~\cref{it:exp}. 

Putting together all the results obtained in the previous sections, we conclude the proof of~\cref{th:main}.

\section*{Acknowledgments}

We thank J.~M.~Coron, M.~Gugat, and E.~Zuazua for their helpful comments. 

GMC and NDN are members of the Gruppo Nazionale per l'Analisi Matematica, la Probabilità e le loro Applicazioni (GNAMPA) of the Istituto Nazionale di Alta Matematica (INdAM).

GMC has been partially supported by the Project  funded  under  the  National  Recovery  and  Resilience  Plan  (NRRP),  Mission  4, Component  2,  Investment  1.4 (Call  for  tender  No.  3138  of  16/12/2021), of  Italian  Ministry  of  University and Research funded by the European Union (NextGenerationEU Award,  No.  CN000023,  Concession  Decree  No.  1033  of  17/06/2022)  adopted  by  the  Italian Ministry of University and Research (CUP D93C22000410001), Centro Nazionale per la Mobilit\`a Sostenibile.  He has also been supported by the 
Italian Ministry of Education, University and Research under the  Programme ``Department of Excellence'' Legge 232/2016 (CUP D93C23000100001), and by the Research Project of National Relevance ``Evolution problems involving interacting scales'' granted by the Italian Ministry of University and Research (MUR PRIN 2022, project code 2022M9BKBC, CUP D53D23005880006).

NDN has been funded by the Swiss State Secretariat for Education, Research and Innovation (SERI) under contract number MB22.00034 through the project TENSE.

MG and FM were partially supported by the GNAMPA-INdAM project
\emph{Modellizzazione ed Analisi attraverso Leggi di Conservazione},
(CUP E53C23001670001).

MG was partially supported by the Project  funded  under  the
National  Recovery  and  Resilience  Plan  (NRRP)
of  Italian  Ministry  of  University and Research
funded by the European Union (NextGenerationEU Award, No.  CN000023,  Concession  Decree  No.  1033  of  17/06/2022  adopted  by  the  Italian Ministry of University and Research, CUP: H43C22000510001, Centro Nazionale per la Mobilità Sostenibile).

MG was also partially supported by the Project  funded  under  the
National  Recovery  and  Resilience  Plan  (NRRP)
of  Italian  Ministry  of  University and Research
funded by the European Union (NextGenerationEU Award, No.  
ECS\_00000037, CUP H43C22000510001,
MUSA -- Multilayered Urban Sustainability Action).

MG and FM were partially supported by the PRIN 2022 project \emph{Modeling, Control and Games through Partial Differential Equations} (D53D23005620006),
funded by the European Union (NextGenerationEU).

\vspace{0.5cm}

\printbibliography

\vfill 
\end{document}